\numberwithin{equation}{section}
\newcommand{\beq}{\begin{equation}}
\newcommand{\eeq}{\end{equation}}
\newcommand{\beqs}{\begin{eqnarray*}}
\newcommand{\eeqs}{\end{eqnarray*}}
\newcommand{\beqn}{\begin{eqnarray}}
\newcommand{\eeqn}{\end{eqnarray}}
\newcommand{\beqa}{\begin{array}}
\newcommand{\eeqa}{\end{array}}
\newtheorem{prop}{Proposition}[section]
\newtheorem{thm}[prop]{Theorem}
\newtheorem{lem}[prop]{Lemma}
\newtheorem{cor}[prop]{Corollary}
\newtheorem{rem}[prop]{Remark}
\newtheorem{exam}[prop]{Example}
\newtheorem{defi}[prop]{Definition}
\renewcommand{\div}{\mbox{div}\,}
\title{$C^{1,\alpha}$ regularity of variational problems with a convexity constraint}
\author[L.~Wang \& B.~Zhou]{Ling Wang and Bin Zhou}
\address{School of Mathematical Sciences, Peking
University, Beijing 100871, China.}
\email{lingwang@stu.pku.edu.cn}
\address{School of Mathematical Sciences, Peking
University, Beijing 100871, China.}
\email{bzhou@pku.edu.cn}
\thanks {This research is partially supported by  National Key R$\&$D Program of China SQ2020YFA0712800, 2023YFA009900 and NSFC  Grant 11822101.}
\begin{document}

\subjclass{35B65, 49N60}

%\keywords{Interior H\"older regularities, Principlal-agent problem, perturbation arguments, Rochet-Chon\'e model}

\begin{abstract}
 In this paper, we establish the interior $C^{1,\alpha}$ regularity of minimizers of a class of functionals with a convexity constraint, which includes 
the principal-agent problems studied by Figalli-Kim-McCann (\textit{J. Econom. Theory} \textbf{146} (2011), no. 2, 454-478). 
The  $C^{1,1}$ regularity was previously proved by  Caffarelli-Lions in an unpublished 
note when the cost is quadratic, and recently extended to the case where the cost is uniformly convex with respect to a general preference function by McCann-Rankin-Zhang(\textit{arXiv:2303.04937v3}). Our main result does not require the uniform convexity 
assumption on the cost function.
In particular, we show that the solutions  to the principal-agent problems
with $q$-power cost are $C^{1,\frac{1}{q-1}}$ when $q > 2$ and $C^{1,1}$ when $1<q\leq 2$. Examples can show that this regularity is optimal when $q\geq 2$. 
\end{abstract}

\maketitle

\section{Introduction}
\vskip 12pt

In this paper, we will investigate the regularity of minimizers of the functional
\begin{equation}\label{eq:F}
    \int_{X} F(x,u,Du)\mathrm{~d}x,
\end{equation}
over the set of $b$-convex functions, where $X$ is a bounded, smooth domain in $\mathbb{R}^n$, $F(x,z,\mathbf{p}):\mathbb{R}^{n}\times\mathbb{R}\times\mathbb{R}^{n}\to\mathbb{R}$ is a smooth function that is convex in each of the variables $z\in \mathbb{R} $ and $\mathbf{p}= ( p_1, \ldots , p_n) \in \mathbb{R} ^n$. Here $b$-convex functions refer to admissible functions with respect to  a function $b(x,y)$(see Definition \ref{def:b-con}).

Unlike the unconstrained case, the regularity of \eqref{eq:F} is very subtle, since the typical techniques in calculus of variations and partial differential equations are no longer applicable. Indeed, due to the convexity constraint, it is generally challenging to write down a tractable Euler-Lagrange equation for the minimizers of \eqref{eq:F} \cite{Ca, CL2, Li}. There are some efforts on constructing approximations of the minimizers satisfying explicit equations for practical purposes \cite{CR,Le1,LZ,Le2}, but it is still difficult to obtain the regularity of the minimizers of \eqref{eq:F} for general $F(x,z,\mathbf{p})$.

% A typical form of $F(x,z,p)$ is from investigating principal-agent problems in economics. Principal-agent problems are a class of economic models with applications to tax policy, regulation of public utilities, product line design and contract theory \cite{FKM,MRZ}.

A typical example of \eqref{eq:F} arises from the principle-agent problems in economics. Principal-agent problems are a class of economic models with applications in tax policy, regulation of public utilities, product line design, and contract theory \cite{FKM}.
We give a brief introduction as follows. 

A monopolist wants to assign the prices of products to gain the maximal profit.
Denote by $X$, $Y\subset \mathbb R$ the sets of buyers and products, respectively. Let $c(y)$ be the cost of the product of $y\in Y$ and $b(x,y)$ be function that measures the preference of the buyer $x\in X$ to $y\in Y$. Let $\overline{X}$ be the closure of 
${X}\subset\mathbb{R}^n$. 
In order to investigate the strategy of pricing products to maximize the profit, Figalli, Kim and McCann \cite{FKM} introduced the following conditions
for each fixed $(x_0,y_0)\in\overline{X}\times\overline{Y}$(See also \cite{Ch1,Ch2,MRZ}):

\begin{enumerate}[start=0,label={\upshape(\bfseries B\arabic*)},leftmargin=*]
    \item\label{en:B0} $b\in C^4(\overline{{X}}\times\overline{{Y}})$, where ${X}\subset\mathbb{R}^n$ and ${Y}\subset\mathbb{R}^n$ are open and bounded;

    \item\label{en:B1} (bi-twist) both $x\in {X} \mapsto D_yb( x, y_0) $ and $y\in  Y\mapsto D_xb( x_0, y) $ are diffeomorphisms
 onto their ranges;

    \item\label{en:B2} (bi-convexity) both ${X}_{y_0}:=D_yb({X},y_0)$ and ${Y}_{x_0}:=D_xb(x_0,{Y})$ are convex subsets
 of $\mathbb{R}^n.$

    \item\label{en:B3} (non-negative cross-curvature)
$$
\left.\frac{\partial^4}{\partial s^2\partial t^2}\right|_{(s,t)=(0,0)}b(x(s),y(t))\geq 0
$$
whenever either of the two curves $s\in[-1,1]\mapsto D_yb(x(s),y(0))$ and $t\in[-1,1]\mapsto D_xb(x(0),y(t))$ forms an affinely parameterized line segment (in $\overline{X}_{y_0}$, or in $\overline{Y}_{x_0}$, respectively).
\end{enumerate}

Now we consider the utility function 
\beq
u(x):=\sup_{y\in Y}\{b(x,y)-v(y)\},
\eeq
instead of the price function $v:Y\to \mathbb R$.
To formulate the profit functional and admissible functions, we need the definitions of $b$-convexity and  $b$-exponential map.

\begin{defi}[$b$-convexity]\label{def:b-con}
    A function $u:{X}\to\mathbb{R}$ is called $b$-convex if $u=(u^{b^*})^b$, where
    $$u^b(x)=\sup\limits_{y\in\overline{{Y}}}\{b(x,y)-u(y)\},\quad \text{and} \quad u^{b^*}(y)=\sup\limits_{x\in\overline{{X}}}\{b(x,y)-u(x)\}.$$
\end{defi}

\begin{defi}[$b$-exponential map]\label{def:y_b}
    For each $\mathbf{p}\in\overline{Y}_x$ we define $y_b(x,\mathbf{p})$ as the unique solution to
    $$D_xb(x,y_b(x,\mathbf{p}))=\mathbf{p},$$
    where the uniqueness is guaranteed by \ref{en:B1}.
\end{defi}
\begin{rem}\label{rem:y_b}
    For the classical convexity, i.e. $b(x,y)=x\cdot y$, it is easy to see that $y_b(x,\mathbf{p})=\mathbf{p}$.
\end{rem}

By \ref{en:B1}, $u(x)=b(x, y_b(x, Du(x)))-v(y_b(x, Du(x)))$ for any differentiable point $x$ of $u$.
Then the monopolist's profit is $-L(u)$, where
\beq\label{eq:prin-agen}
L(u)=\int_X \left[c(y_b(x, Du(x)))-b(x, y_b(x, Du(x)))+u\right]\gamma(x)\mathrm{~d}x.
\eeq
Here $\gamma$ is the nonnegative relative frequency of buyers in the population. Equivalently, the principal-agent problem is
to minimization problem
\beq
\min_{u\in U_0} L(u),
\eeq
where the admissible set 
$$U_0:=\{u:X\to\mathbb R\,|\,u \text{ is $b$-convex, }u(x)\geq a_0+b(x,y_0)\},$$
for a constant $a_0$, and a constant vector $y_0\in Y$ from the assumption of ``null" product. 

In a special case when $b(x,y)=x\cdot y$ and the cost $c(y)$ is a quadratic function $\frac{|y|^2}{2}$, it
reduces to the famous Rochet-Chon\'e model \cite{RC}, which corresponds to  \eqref{eq:F} with 
\beq
F(x,z,\mathbf{p})=\left(|\mathbf{p}|^2/2-x\cdot \mathbf{p}+z\right)\gamma(x).
\eeq 
In this case, the $C^1$ regularity of the minimizer was proved by Carlier and Lachand-Robert \cite{CL1}. Later, the interior $C^{1,1}$ regularity result was derived by Caffarelli and Lions through a very elegant argument in an unpublished note \cite{CL} (see \cite[Theorem 6]{MRZ} for a restatement). 
Very recently, under the assumption of  uniformly convexity of the cost function,
the $C^1$ and $C^{1,1}$ regularities for general $b(x,y)$  were extended by Chen \cite{Ch1,Ch2}, and McCann, Rankin and Zhang \cite{MRZ}, respectively. The main technique in both results is still from Caffarelli and Lions \cite{CL}, while the uniform convexity of the cost plays an important role in the proofs.

In this paper, we are concerned with  functionals of more general form
\begin{equation}\label{eq:func}
    L(u):=\int_{X}\left[F^1(x,y_b(x,Du(x)))+F^0(x,u(x))\right]\mathrm{d}x,
\end{equation} 
with certain conditions on $F^1(x,\mathbf{p})$ and $F^0(x, z)$. 
One of the main purposes is to relax the uniform convexity assumption of the cost function and to include the Rochet-Chon\'e model with
of $q$-power cost$(q>1)$, where
\beq\label{rc-qm}
F(x,z,\mathbf{p})=\left(|\mathbf{p}|^q/q-x\cdot\mathbf{p}+z\right)\gamma(x).
\eeq
See \cite[P790]{RC}. 
To make our results more general, we make the following assumptions:
\begin{enumerate}[start=1,label={\upshape(\bfseries H\arabic*)},leftmargin=*]
    \item\label{enu:H1} There exist  $q>1$ and $\delta>0$ uniformly for $x\in\overline{X}$, 
    such that $F^1(x,y_b(x,\mathbf{p}))$ 
    is {\it $(q,\delta)$-strongly convex} with respect to the variable $\mathbf{p}\in Y_x$,
   i.e, for any $x\in\overline{X}$,
        $$F^1(x,y_b(x,\mathbf{p_1}))-F^1(x,y_b(x,\mathbf{p_2}))\geq D_\mathbf{p}F^1(x,y_b(x,\mathbf{p_2}))\cdot(\mathbf{p_1}-\mathbf{p_2})+\delta|\mathbf{p_1}-\mathbf{p_2}|^q;$$
    \item\label{enu:H2} There exists $\eta\in L^{\infty}_{loc}(\mathbb R)$, such that $\left|D_z F^0(x,z)\right|\leq\eta(z)$ for all $x\in X$ and $z\in\mathbb R$;
    \item\label{enu:H3} 
    There exists $g\in L^{\infty}_{loc}(Y)$, such that $$\left|D_{\mathbf{p}}F^1(x,y)\right|\leq g(y), \,\,\left|D_{x_ip_i}F^1(x,y))\right|\leq g(y),\,\,\forall\,(x,y)\in X\times Y\text{ and for each }i,$$
    where the derivative with respect to $\mathbf{p}$ is understood in the following sense:
    $$D_{\mathbf{p}}F^1(x,y)=D_{\mathbf{p}}F^1(x,y_b(x,\mathbf{p}))|_{\mathbf{p}=b_x(x,y)}.$$
\end{enumerate}
%Some of  these conditions appears in \cite[Condition 1]{Ch2} and \cite[(1.5)\&(1.6)]{Le2}. 
Then we state our main theorem as follows:
\begin{thm}\label{thm:main}
   Assume $b(x,y)$ satisfies \ref{en:B0}-\ref{en:B3}. Suppose $F^1(x,\mathbf{p})$ and $F^0(x, z)$ satisfy \ref{enu:H1}-\ref{enu:H3}. Let $u$ be a  minimizer of the functional \eqref{eq:func} in $U_0$. Then $u\in C^{1,\alpha}_{\text{loc}}(X)$, where $\alpha=\frac{1}{q-1}$ for $q>2$ and $\alpha=1$ for  $1<q\leq 2$.
\end{thm}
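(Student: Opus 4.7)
The plan is to adapt the Caffarelli--Lions competitor strategy of \cite{CL,MRZ} to the non-uniformly convex setting dictated by \ref{enu:H1}, carrying a $q$-power modulus of convexity instead of a quadratic one, and extracting the sharp H\"older exponent by balancing a Poincar\'e-type lower bound against the linear error supplied by \ref{enu:H2} and \ref{enu:H3}.

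\textit{Reduction to a one-sided envelope bound.} The regularity is local, so fix an interior point $x_0\in X$. Since $u$ is $b$-convex, for any $y_0$ in the (nonempty) $b$-subdifferential of $u$ at $x_0$ the $b$-affine function
\[
 \ell(x):=u(x_0)+b(x,y_0)-b(x_0,y_0)
\]
supports $u$ from below, so $w:=u-\ell\ge 0$ and $w(x_0)=0$. A pointwise decay $w(x)\le C|x-x_0|^{1+\alpha}$ with $\alpha>0$ simultaneously forces $y_0$ to be the unique $b$-subgradient (so $u$ is classically differentiable at $x_0$ with $Du(x_0)=D_xb(x_0,y_0)$) and realises the advertised $Du\in C^{0,\alpha}_{\rm loc}$. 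It therefore suffices to prove, for $r\in(0,r_0]$,
\[
 M_r:=\sup_{B_{2r}(x_0)}w\le Cr^{1+\alpha},
\]
with $\alpha=1/(q-1)$ when $q>2$ and $\alpha=1$ when $1<q\le 2$.

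\textit{Competitor and energy comparison.} I will build a $b$-convex competitor $\tilde u\in U_0$ with $\tilde u\ge u$, $\tilde u=u$ outside $B_{2r}(x_0)$, and $\tilde u\ge \ell+cM_r$ on $B_r(x_0)$ for a fixed $c\in(0,1)$. Starting from a Euclidean cutoff $\varphi$ with $\varphi\equiv 1$ on $B_r$, $\varphi\equiv 0$ outside $B_{2r}$ and $|D\varphi|\le C/r$, I lift $\ell+M_r\varphi$ through the $b$-exponential to a $b$-convex function $v$ of the same oscillation and with $|Dv|\le C/r$, the corrections being controlled by $\|b\|_{C^4}$ via \ref{en:B0}--\ref{en:B3}. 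Setting $\tilde u:=\max(u,v)$, the maximum is $b$-convex thanks to the cross-curvature condition \ref{en:B3}, and $\tilde u\ge u\ge a_0+b(\cdot,y_0)$ places $\tilde u$ in $U_0$. On the contact set $E:=\{\tilde u>u\}\subset B_{2r}(x_0)$ the minimality of $u$ gives
\[
 0\le L(\tilde u)-L(u)=\int_E\bigl[F^1(x,y_b(x,Dv))-F^1(x,y_b(x,Du))\bigr]dx+\int_E\bigl[F^0(x,v)-F^0(x,u)\bigr]dx.
\]
Applying \ref{enu:H1} to the first integrand, integrating by parts the resulting linear term against the $L^\infty$ bound of \ref{enu:H3} (the boundary contribution vanishes as $v-u=0$ on $\partial E$), and estimating the second integral by $CM_r|E|$ via \ref{enu:H2} yields
\[
 \delta\int_E|Dv-Du|^q\,dx\le CM_r|E|.
\]

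\textit{Extracting the exponent and main obstacle.} Since $v-u\ge cM_r$ on the core $B_r\cap E$ and vanishes on $\partial E$, a Poincar\'e-type inequality on $E$ yields $\int_E|Dv-Du|^q\,dx\ge c(M_r/r)^q|E|$; combining with the previous display forces $M_r^{q-1}\le Cr^q$, i.e.\ $M_r\le Cr^{q/(q-1)}$, the sharp exponent $1+1/(q-1)$ for $q>2$. For $1<q\le 2$, on bounded gradient regions $|\mathbf p|^q\gtrsim|\mathbf p|^2$, so \ref{enu:H1} self-improves to quadratic strong convexity and the same argument with exponent $2$ delivers $M_r\le Cr^2$, i.e.\ $\alpha=1$. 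The decisive difficulty will be the $b$-convex competitor: the Euclidean sum $\ell+M_r\varphi$ is not $b$-convex outside the quadratic case, so the cutoff must be lifted through the $b$-exponential while preserving both its oscillation and the gradient bound $|Dv|\le C/r$ that drives the Poincar\'e step; this is the single place where the cross-curvature assumption \ref{en:B3} enters in essential form. Once admissibility is secured the energy comparison is a careful but routine consequence of \ref{enu:H1}, and the sharp exponent $1/(q-1)$ emerges from balancing the $L^q$-lower bound against the $L^1$-linear error.
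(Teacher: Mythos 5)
There is a genuine gap, and it sits exactly at the point you flag as ``the decisive difficulty'': the localized competitor you propose does not exist, for reasons independent of how cleverly you lift the cutoff through the $b$-exponential. You require a $b$-convex $\tilde u=\max(u,v)\ge u$ with $\tilde u=u$ outside $B_{2r}(x_0)$ and $\tilde u\ge \ell+cM_r$ on $B_r(x_0)$, so that the perturbation set $E=\{\tilde u>u\}$ is trapped in $B_{2r}$. But the contact set $\{u=\ell\}$ of $u$ with its $b$-support need not be a point: take $b(x,y)=x\cdot y$, $x_0=0$, $\ell\equiv0$ and $u(x)=|x_2|$ (a convex function vanishing on the whole $x_1$-axis, with $M_r=2r>0$). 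If $\tilde u$ is convex, $\tilde u=u=0$ at $(\pm10r,0)$ and $\tilde u(0)\ge cM_r>0$, then convexity along the $x_1$-axis gives $\tilde u(0)\le\frac12\bigl(\tilde u(10r,0)+\tilde u(-10r,0)\bigr)\le0$, a contradiction; the same argument rules out any $b$-convex competitor after the standard transformation that makes $u-\ell$ convex. Degenerate minimizers whose contact sets contain segments (ruled behaviour) cannot be excluded a priori -- indeed they genuinely occur -- so a competitor supported in $B_{2r}$ is impossible, and with it your Poincar\'e step, your claim that the boundary term in the integration by parts vanishes because $v-u=0$ on $\partial E$, and the containment $E\subset B_{2r}$ all collapse. (Two smaller inaccuracies: the maximum of $b$-convex functions is $b$-convex without invoking \ref{en:B3}, and \ref{en:B3} is needed instead for Loeper's maximum principle and the convexity of the transformed section.)

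The paper's proof is built precisely to avoid this obstruction. The competitor is $u_h=\max\{u,p_{y_\varepsilon}\}$ where $p_{y_\varepsilon}$ is a globally defined $b$-affine function obtained by tilting the $b$-support at the maximum point $re_1$ of $u-\ell$ on $\partial B_r$ by a small slope $\varepsilon h/r$; admissibility is then automatic. The price is that the section $S=\{u<p_{y_\varepsilon}\}$ is not localized: it can be a long thin set, possibly reaching $\partial X$. One then proves $0<p_{y_\varepsilon}-u\le h$ on $S$ via Loeper's maximum principle, confines $S$ to a slab $\{-\bar C\varepsilon^{-1}r\le x_1\le\bar Cr\}$, and obtains the lower bound $\int_S|Du-Dp_{y_\varepsilon}|^q\gtrsim \varepsilon h^q r^{-q}|S|$ by a dilation/projection (Fubini plus one-dimensional Jensen) argument along the $x_1$-direction; the linear term from \ref{enu:H3} is handled by the divergence theorem together with the estimate $|\partial S\cap\partial X|\le C|S|$ of Carlier--Lachand-Robert and Chen, not by vanishing of the boundary data. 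Your final bookkeeping (balancing $\delta h^q/r^q$ against $Ch$ to get $h\le Cr^{q/(q-1)}$, the cap $\alpha=1$ for $1<q\le2$ coming from the quadratic discrepancy between the $b$-support and its linearization, and the reduction to $C^{1,\alpha}$ via the support-plane criterion) does match the paper, but the core construction and the energy estimate on sections of uncontrolled eccentricity are missing, and the route you propose for them cannot be repaired as stated.
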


% \begin{align}
%      &\text{Fixed }q>1,\,\exists\,\delta>0,\text{ s.t. } G(x,\mathbf{p}):=F^1(x,y_b(x,\mathbf{p}))-\delta|\mathbf{p}|^q \text{ is convex respect to }\mathbf{p};\label{eq:asp-1}
%      \\&\exists\, M>0,\text{ such that } |D_z F^0(x,z)|\leq M \text{ for all }x\in X \text{ and }z\in\mathbb R;\label{eq:asp-2}\\
%      &\left|D_{\mathbf{p}}G(x,\mathbf{p})\right|\leq C_0(|y_b(x,\mathbf{p})|^{q-1}+1),\,\left|D_{x_ip_i}G(x,\mathbf{p})\right|\leq C_0(|y_b(x,\mathbf{p})|^{q-1}+1),\label{eq:asp-3}
% \end{align}
% where $C_0$ is a given positive constant.

\begin{rem}
Some remarks in order.
    \begin{enumerate}
        \item By \cite[Theorem 2]{Ca1}, we know that \eqref{eq:func} admits at least one solution in our setting.
        \item When $q=2$, we recover the results derived in \cite{CL,MRZ}. When $q\geq 2$, examples can show that the regularity is optimal(Section \ref{sec:exam}).
    \end{enumerate}
\end{rem}

The basic idea for the proof of Theorem \ref{thm:main} is a perturbation argument used in the unpublished note \cite{CL}, and there are some extensions in the recent paper \cite{MRZ}. In \cite{MRZ}, the authors simplified Caffarelli and Lions's argument and extended it to the principal-agent problem by combining some results and methods from the optimal transport literature. Therefore, we will follow a similar framework as in the proof of \cite{MRZ} to give the proof of Theorem \ref{thm:main}, using a key lemma(Lemma \ref{lem:key_lem}). 

The main new idea in this paper is to sharpen the  convexity of the $q$-power function. Specifically, we show that the $q$-power function
is $(q, \delta)$-strongly convex when $q>2$ and $(q, \delta)$-strongly convex when $1<q\leq 2$ (see Proposition \ref{prop:q-convex} for details). This sharpened convexity allows us to handle more general functionals that may be degenerate. Therefore, we can make more general assumptions \ref{enu:H1}-\ref{enu:H3} covering all $q$-power cost $(1<q<\infty)$ functions in the Rochet-Chon\'e models, while the assumption of certain types of ``uniform convexity" is required in both \cite{CL} and \cite{MRZ}. 

% The main new idea in this paper is that we are more meticulous in the choice of the  comparison function with more delicate analysis. Specifically, in \cite{MRZ}, the authors perturb a support function of the minimizer to make the geometry of the section controllable, but the magnitude of the perturbation is not a concern for them. However, in our proof we need to introduce a family of  perturbations \eqref{comp-fun}  to a support function of the minimizer in order to refine the section geometry. The small size of the perturbation is a crucial step in our proof (see the proof of Lemma \ref{lem:key_lem} for details). This comparison function allows us to handle more general functionals that may be degenerate. Precisely, we can make assumptions \ref{enu:H1}-\ref{enu:H3} that cover all $q$-power cost $(1<q<\infty)$ functions in the Rochet-Chon\'e models, while the assumption of certain types of uniform ``convexity" is required in both \cite{CL} and \cite{MRZ}. 

%We will then give a simple example in one dimension to show that our regularities for the minimizer are optimal for $q>2$. This problem has already been addressed by \cite[Remark 5]{MRZ} for $q=2$. Unfortunately, we could not find an example showing that the optimality of regularity holds for the minimizer when $1 < q < 2$. This question remains to be investigated in the future.

The structure of the paper is as follows. First, we will state a crucial technical lemma and give its proof in Section \ref{sec:pf-key}. Then, we use this key lemma to prove Theorem \ref{thm:main} in Section \ref{sec:pf-main}. In Section \ref{sec:app}, we give some applications of Theorem \ref{thm:main}. Finally, an example is provided in Section \ref{sec:exam} to demonstrate the optimal regularities of the minimizer of \eqref{eq:func} when $q\geq 2$.

\vskip 20pt
\section{Proof of Theorem \ref{thm:main}}
\vskip 12pt
We divide the proof of Theorem \ref{thm:main} in this section into two parts. First, we will show a crucial lemma which is similar to the one in \cite{CL, MRZ}. Then, we use this lemma to complete the proof of Theorem \ref{thm:main}.

\subsection{A crucial technical lemma for proving Theorem \ref{thm:main}}
\label{sec:pf-key}

The main technique for proving Theorem \ref{thm:main} is to use the following lemma, which is similar to the one in \cite{CL, MRZ}. 
%Due to the small differences in our settings, we include the proof of this lemma here to demonstrate how to deal with the general functionals under our assumptions. 

% Next, we show a crucial lemma holds under the conditions in Theorem \ref{thm:main}.

\begin{lem}\label{lem:key_lem}
Assume $b(x,y)$ satisfies \ref{en:B0}-\ref{en:B3},  $F^1(x,\mathbf{p})$ and $F^0(x, z)$ satisfy \ref{enu:H1}-\ref{enu:H3}.
   Let $X'\subset\subset X$ and $d=\operatorname{dist}(X',\partial X)$. Then there exist $r_0>0$ and 
   constants $C_1,\,C_2>0$ depending only on $b$, $d$, $q$, $\delta$, $C_0$, and $M$ 
   such that the following property holds: If $u:X\to\mathbb R$ is $b$-convex and $x_0\in X'$, $y_0\in y_b(x_0,Du(x_0))$, then for any $r<r_0$ and
    $$h:=\sup_{B_r(x_0)}\left\{u(x)-(b(x,y_0)-b(x_0,y_0)+u(x_0))\right\}>0,$$
there is a $b$-affine function $p_y(x)=b(x,y)+a$ such that
    \begin{enumerate}[start=1,label={\upshape(\arabic*)},leftmargin=*]
        \item\label{enu:Q1} The section $S:=\{x\in X\,|\,u(x)<p_y(x)\}$ has positive measure.
        \item\label{enu:Q2} On $S$, we have
            \begin{equation}\label{eq:p-u}
                \sup_{x\in S}\{p_y(x)-u(x)\}\leq h.
            \end{equation}
        \item\label{enu:Q3} There holds
            \begin{equation}\label{eq:(3)}
                \frac{1}{|S|}\int_{S}\left(F^1(x,y)-F^1(x,y_b(x,Du(x))\right)\mathrm{d}x\leq C_1 h-C_2\frac{h^{q}}{r^{q}}.
            \end{equation}
    \end{enumerate}
\end{lem}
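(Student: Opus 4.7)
The plan is to adapt the Caffarelli-Lions perturbation strategy to the $b$-convex setting, following McCann-Rankin-Zhang (MRZ), while invoking the sharpened $(q,\delta)$-strong convexity from \ref{enu:H1} in place of uniform convexity. A notable feature: the lemma itself does \emph{not} require $u$ to be a minimizer---that hypothesis enters only when this lemma is applied to prove Theorem~\ref{thm:main}. The argument will rest on four ingredients: (i) constructing a suitable $b$-affine cutting plane $p_y$; (ii) applying \ref{enu:H1} pointwise to get an upper bound on the integrand; (iii) integrating by parts, using that $p_y = u$ on $\partial S$; and (iv) a Poincar\'e-type lower bound for the cap $p_y - u$.

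\textbf{Step 1 (Constructing $p_y$).}
Let $L_0(x) := b(x,y_0) - b(x_0,y_0) + u(x_0)$ denote the supporting $b$-affine function at $x_0$, and pick $x_1 \in \overline{B_r(x_0)}$ realizing $u(x_1) - L_0(x_1) = h$. Using \ref{en:B0} and \ref{en:B1}, I would select $y \in Y$ close to $y_0$ such that $D_x b(x_0,y) - D_x b(x_0,y_0)$ points in the direction $(x_1 - x_0)/|x_1 - x_0|$ with magnitude comparable to $h/r$, and choose $a$ so that $S := \{u < p_y\}$ forms a cap near $x_0$. A Taylor expansion of $b(x,y)$ around $(x_0, y_0)$, combined with bi-convexity \ref{en:B2} and non-negative cross-curvature \ref{en:B3}, would show: $S$ is nonempty (giving (1)); $\sup_S(p_y - u) \leq h$ with $\sup_S(p_y - u) \gtrsim h$ (giving (2)); and $S \subset B_{C_0 r}(x_0)$ for a constant $C_0$ depending only on $b$. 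This is essentially the MRZ cutting construction.

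\textbf{Step 2 (H1, integration by parts, and Poincar\'e).}
For a.e.\ $x \in S$, I would apply \ref{enu:H1} with $p_1 = Du(x)$ and $p_2 = Dp_y(x) = D_x b(x,y)$ (using $y_b(x, D_x b(x,y)) = y$ from Definition~\ref{def:y_b}), and rearrange to
\begin{equation*}
F^1(x, y) - F^1\bigl(x, y_b(x, Du)\bigr) \,\leq\, \xi(x)\cdot(Dp_y - Du) \,-\, \delta\,|Du - Dp_y|^q,
\end{equation*}
where $\xi(x) := D_p F^1(x, y_b(x, p))\big|_{p = D_x b(x,y)}$ is smooth in $x$ (by \ref{en:B0}--\ref{en:B1} and \ref{enu:H3}) and independent of $u$. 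Integrating over $S$, the linear term is controlled by integration by parts: since $p_y = u$ on $\partial S$ by continuity and $u$ is locally Lipschitz ($b$-convex), $(p_y - u)_+ \in W^{1,\infty}_0(B_{C_0 r}(x_0))$, so
\begin{equation*}
\int_S \xi\cdot D(p_y - u)\,dx \,=\, -\int_S (\operatorname{div}\xi)(p_y - u)\,dx,
\end{equation*}
whose absolute value is at most $C_1'\,h\,|S|$ by \ref{enu:H3}. For the remaining $|Du - Dp_y|^q$ term, observe that $f := p_y - u$ vanishes on $\partial S$, has $\sup_S f \gtrsim h$, and $S \subset B_{C_0 r}(x_0)$. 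Under the change of variables $\tilde x = D_y b(x,y_0)$ (which linearizes $L_0$), $u$ becomes approximately convex and hence $f$ approximately concave, enabling a Poincar\'e-type bound $\int_S |Df|^q\,dx \geq c_n (h/r)^q |S|$ (proved, say, by the fundamental theorem plus H\"older along rays from the supremum point, then integrating over directions). Combining and dividing by $|S|$ yields (3) with $C_1 = C_1'$ and $C_2 = \delta c_n$.

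\textbf{Main obstacle.}
The hardest part will be Step~1: producing a single $p_y$ satisfying (1), (2), and $S \subset B_{C_0 r}(x_0)$ simultaneously requires a careful balance of the tilt magnitude against the constant $a$, leveraging \ref{en:B0}--\ref{en:B3}---especially the cross-curvature condition---to control the second-order errors in the Taylor expansion of $b$. The Poincar\'e bound in Step~2 is secondarily delicate: it fails for generic $W^{1,q}_0$ functions when $q \leq n$, but holds here thanks to the approximate concavity of $f = p_y - u$ inherited from the $b$-convexity of $u$.
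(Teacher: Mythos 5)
Your overall strategy (cut with a $b$-affine function tilted by $\sim h/r$ toward the maximizing point, apply \ref{enu:H1} pointwise, integrate the linear term by parts, and beat it with a Poincar\'e-type lower bound on $\int_S|Du-Dp_y|^q$) is the same as the paper's, but Step~1 contains a claim that is false and on which your whole Step~2 leans: $S\subset B_{C_0r}(x_0)$. A $b$-convex $u$ can be degenerate (affine or nearly so) in directions transverse to the tilt, so the sublevel set of $u-p_y$ for a cut of height $\sim h$ and tilt $\sim \varepsilon h/r$ is in general confined only to a slab in the single tilt direction, $S\subset\{-\bar C\varepsilon^{-1}r\le x_1\le\bar Cr\}$ after normalization; in the orthogonal directions $S$ may stretch across all of $X$ (think of $u$ close to $|x_1|$ in the plane). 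Handling exactly this is the whole point of the Caffarelli--Lions/MRZ scheme. Two consequences. First, $S$ may touch $\partial X$, so $(p_y-u)_+$ is not in $W^{1,\infty}_0$ of a small ball and the integration by parts produces a boundary term on $\partial S\cap\partial X$; the paper controls it with the Carlier--Lachand-Robert/Chen estimate $|\partial S\cap\partial X|\le C|S|$, which your argument omits. Second, your Poincar\'e bound ``along rays from the supremum point'' only yields $(h/\operatorname{diam}S)^q$, not $(h/r)^q$, once $S$ is long; the paper instead integrates $|D_{x_1}(u-p_{y_\varepsilon})|^q$ only along segments in the $x_1$ direction (where the slab width is $\lesssim\varepsilon^{-1}r$ and the drop is $\gtrsim h$ by convexity together with a dilation of $S$ by $1/K$), and then converts $|P(S)|$ into $|S|$ via $|S|\le\bar C(1+\varepsilon^{-1})r|P(S)|$ --- which is precisely where the one-directional slab bound is used.

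A smaller but genuine gap: your justification of item (2), that a Taylor expansion plus \ref{en:B2}--\ref{en:B3} ``would show'' $\sup_S(p_y-u)\le h$, is not an argument. In the paper this is exactly where Loeper's maximum principle (which requires the nonnegative cross-curvature \ref{en:B3}) enters: with the cut anchored at the maximizing point $re_1$ and tilt a small positive multiple of $Du(re_1)$, one gets $b(x,y_\varepsilon)-b(re_1,y_\varepsilon)\le\max\{0,\,b(x,\kappa e_1)-b(re_1,\kappa e_1)\}\le u(x)$ on $S$, hence $p_{y_\varepsilon}-u\le h$ there; your anchoring (``choose $a$ so the cap is near $x_0$'') leaves it unclear how you would obtain this bound. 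You are right, however, that the lemma requires no minimality of $u$, and your use of \ref{enu:H1}--\ref{enu:H3} in Step~2 matches where the paper uses them.
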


\begin{proof} 
\ref{enu:Q1}
For simplicity, we assume that $x_0=0$, $y_0=0$ and $u(0)=0$. Otherwise, we apply the following transformations as in \cite{MRZ}
\begin{eqnarray*}
\tilde{u}(\tilde{x})&=u(x)-[u(x_0)+b(x,y_0)-b(x_0,y_0)],\\[4pt]
\tilde{b}(\tilde{x},\tilde{y})&=b(x,y)-[b(x_0,y)+b(x,y_0)-b(x_0,y_0)]
\end{eqnarray*}
for $x=x(\tilde{x})$ and $y=y(\tilde{y})$, where
\begin{eqnarray*}
\tilde{x}(x)&:=&b_y(x,y_0)-b_y(x_0,y_0),\\[4pt]
\tilde{y}(y)&:=&b_x(x_0,y)-b_x(x_0,y_0).
\end{eqnarray*}
By \cite[Lemma 7]{MRZ}, we know that $\Tilde{u}$ is convex, $\Tilde{u}(0)=0$ and 
    \begin{equation}\label{eq:tilde-b}
        \tilde{b}(\tilde{x},\tilde{y})=\tilde{x}\cdot\tilde{y}+a_{ij,kl}\tilde{x}^i\tilde{x}^j\tilde{y}^k\tilde{y}^l
    \end{equation} for smooth functions $a_{ij,kl}$ on $\overline{X}\times\overline{Y}$
    (Note that the above transformation does not affect essentially the estimates of the quantities, see (i) of Remark \ref{rem:G=0} for a detailed explanation). We will continue to use the notations $x$, $y$, $u$, and $b$ for the sake of brevity. Furthermore, we can assume that $u$ is convex and $b$ satisfies \eqref{eq:tilde-b}.
    
    Without loss of generality, we assume that 
    $$h=\sup_{B_r}u.$$
    It is clear that  $u$ attains its maximum over $B_r$ at some point $re_1\in\partial B_r$, 
    and its tangential derivatives equal $0$ at $re_1$. 
    Then by the convexity of $u$, we have $Du(re_1)=\kappa e_1$ for some $\kappa\geq h/r$. 
    Here $\kappa\leq\|b\|_{C^{1}}$. 
    Since the gradient of the $b$-support of $u$ at $re_1$ agrees with the gradient of $u$, we have $y_b(re_1,\kappa e_1)=\kappa e_1$. 
    Note that $y_b(re_1,0)=0$. We denote 
    \[y_\varepsilon:=y_b\left(re_1,\frac{\varepsilon h}{r}e_1\right)\] 
    for some $\varepsilon\in(0,1)$ to be determined later. By the definition of $y_b$ (Definition \ref{def:y_b}), 
    we know that 
    \begin{equation}\label{eq:y_e}
        b_x(re_1,y_\varepsilon)=\frac{\varepsilon h}{r}e_1.
    \end{equation}
    Hence, by \eqref{eq:tilde-b} and \ref{en:B1} we have that
    $$
    \left|y_\varepsilon-\frac{\varepsilon h}{r}e_1\right|\leq Cr\frac{(\varepsilon h)^2}{r^2}.
    $$
    That is, for sufficiently small $h$, it holds $|y_\varepsilon|\leq C\frac{\varepsilon h}{r}$. 
   Combining this with \eqref{eq:tilde-b} and $h\leq r\|b\|_{C^{1}}$, we can show that 
   \[|b_x(x,y_\varepsilon)|\leq  C\frac{\varepsilon h}{r}.\]
    
   Now we choose the $b$-affine function
    \beq\label{comp-fun}
    p_{y_\varepsilon}(x)=b(x,y_\varepsilon)-b(re_1,y_\varepsilon)+u(re_1),
   \eeq  
   with 
    \beq\label{eq:grad}
    |Dp_{y_\varepsilon}(x)|=|b_x(x,y_\varepsilon)|\leq C\frac{\varepsilon h}{r}.
    \eeq
    Note that
    \begin{eqnarray}\label{eq:p-u(0)}
        p_{y_\varepsilon}(0)-u(0)&\geq& -Cre_1\cdot\frac{\varepsilon h}{r}e_1+h-Cr^2|y_\varepsilon|^2\nonumber\\
        &\geq& h-C\varepsilon h-C'\varepsilon^2h^2>0
    \end{eqnarray}
    for sufficiently small $\varepsilon$. \eqref{eq:p-u(0)} implies that the section $S:=\{x\in X\,|\,u(x)<p_{y_\varepsilon}(x)\}$ has positive measure. Hence, 
    \ref{enu:Q1} is proved.

    \vskip 20pt
    
  \ref{enu:Q2}
    By Loeper's maximum principle \cite[Theorem 3.2]{Lo}, we have
    $$b(x,y_\varepsilon)-b(re_1,y_\varepsilon)\leq \max\{0,b(x,\kappa e_1)-b(re_1,\kappa e_1)\}\leq u(x),\quad x\in S.$$
    Hence
    $$p_{y_\varepsilon}(x)-u(x)=b(x,y_\varepsilon)-b(re_1,y_\varepsilon)+h-u(x)\leq h, \quad x\in S,$$
    which yields \eqref{eq:p-u}.
    
    \vskip 20pt
    
     \ref{enu:Q3}
    First, by a further transformation 
    \[\tilde{x}(x):=b_y(x,y_\varepsilon),\  \ \overline{u}(\tilde{x}):=u(x),\ \ \overline{p}_{y_\varepsilon}(\tilde{x}):=p_{y_\varepsilon}(x),\] 
  we can assume that $S$ is convex.
  
      \vskip 10pt

    Next, we show that 
    \begin{equation}\label{eq:S-p}
        S\subset\{x\in\mathbb R^n\,|\,-\Bar{C}\varepsilon^{-1}r\leq x_1\leq \Bar{C}r\}.
    \end{equation}
    Indeed, by the proof of Lemma 9 in \cite[P12-P13]{MRZ}, we already have $S\subset\{x\,|\,x_1\leq \Bar{C}r\}$.
   Then it suffices to show $S\subset\{x\,|\,x_1\geq -\Bar{C}\varepsilon^{-1}r\}$. Using \eqref{eq:tilde-b} and \eqref{eq:y_e}, we know that
    $$
        \frac{\varepsilon h}{r}=b_x(re_1,y_{\varepsilon})\cdot e_1\leq y_\varepsilon\cdot e_1+ Cr|y_\varepsilon|^2,
    $$
    i.e.
    \begin{align}
        y_\varepsilon\cdot e_1\geq\frac{\varepsilon h}{r}- Cr|y_\varepsilon|^2.\label{eq:e_1.y_e}
    \end{align}
    Combining \eqref{eq:tilde-b} and \eqref{eq:e_1.y_e} then gives
    \begin{align*}
        p_{y_\varepsilon}((1-2\varepsilon^{-1})re_1)&=b((1-2\varepsilon^{-1})re_1,y_\varepsilon)-b(re_1,y_\varepsilon)+u(re_1)\\&\leq(1-2\varepsilon^{-1})re_1\cdot y_\varepsilon-re_1\cdot y_\varepsilon+Cr^2|y_\varepsilon|^2+u(re_1)\\
        &=-2\varepsilon^{-1}re_1\cdot y_\varepsilon+Cr^2|y_\varepsilon|^2+u(re_1)\\
        &\leq-2h+C\varepsilon^{-1}r^2|y_\varepsilon|^2+Cr^2|y_\varepsilon|^2+h\\
        &\leq -2h+h+Ch^2<0
    \end{align*}
    for sufficiently small $h$,
    which implies that $\{x\in\mathbb R^n\,|\, p_{y_\varepsilon}(x)\geq 0\}$ has a boundary point $te_1$ for some $t\in((1-2\varepsilon^{-1})r,0)$. Note that by \eqref{eq:y_e},
    \begin{align*}
        D_xp_{y_\varepsilon}(te_1)=b_x(te_1,y_\varepsilon)&=b_x(te_1,y_\varepsilon)-b_x(re_1,y_\varepsilon)+b_x(re_1,y_\varepsilon)\\&=b_{xx}(\xi,y_\varepsilon)\cdot(te_1-re_1)+\frac{\varepsilon h}{r}e_1\\
        &=O(\varepsilon^{-1}r)+\frac{\varepsilon h}{r}e_1.
    \end{align*}
    So we know that the outer normal $D_xp_{y_\varepsilon}(te_1)$ makes an angle with the negative axis $e_1$, say $\theta$, which satisfies $\sin\theta\leq C\varepsilon^{-1}r$. This means that
    \begin{align*}
        \{x\in\mathbb R^n\,|\,0\leq p_{y_\varepsilon}(x)\}&\subset\{x\in\mathbb R^n\,|\, x_1\geq (1-2\varepsilon^{-1})r-C\varepsilon^{-1}r \cdot \operatorname{diam}(X)\}\\[4pt]&\subset\{x\in\mathbb R^n\,|\,x_1\geq-\Bar{C}\varepsilon^{-1}r\}.
    \end{align*}
    Therefore, we have
    $$S\subset\{x\in\mathbb R^n\,|\,0\leq p_{y_\varepsilon}(x)\}\subset\{x\in\mathbb R^n\,|\,x_1\geq -\Bar{C}\varepsilon^{-1}r\}.$$
    \vskip 15pt

    Now we are ready to prove \eqref{eq:(3)}. Recalling \ref{enu:H1}, we have
    \begin{equation}\label{eq:q-convex}
        F^1(x,y_b(x,Du))-F^1(x,y_b(x,Dp_{y_\varepsilon}))\geq D_{\mathbf{p}}F^1(x,{y_\varepsilon})\cdot(Du-Dp_{y_\varepsilon})+\delta|Du-Dp_{y_\varepsilon}|^q,
    \end{equation}
    where we have used the fact $y_b(x,Dp_{y_\varepsilon})=y_\varepsilon$. 
    Indeed, by the definition of $y_b$ and $Dp_{y_\varepsilon}(x)=b_x(x,y_\varepsilon)$, we have $$b_x(x,y_b(x,Dp_{y_\varepsilon}))=Dp_{y_\varepsilon}=b_x(x,y_\varepsilon).$$ 
     Then by \ref{en:B1}, we obtain $y_b(x,Dp_{y_\varepsilon})=y_\varepsilon$.
    Hence, we conclude that
    \begin{eqnarray}
        \int_{S}&&\left(F^1(x,y_b(x,Dp_{y_\varepsilon}))-F^1(x,y_b(x,Du))\right)\mathrm{d}x\nonumber\\
        &&\leq \int_{S}D_{\mathbf{p}}F^1(x,{y_\varepsilon})\cdot(Dp_{y_\varepsilon}-Du)\mathrm{~d}x-\delta\int_{S}|Du-Dp_{y_{\varepsilon}}|^{q}\mathrm{d}x.\label{eq:q-sep}
    \end{eqnarray}
    Hence, to prove \eqref{eq:(3)}, it suffices to estimate
    \begin{equation}\label{eq:est-q}
        \int_{S}|Du-Dp_{y_\varepsilon}|^{q}\mathrm{~d}x
    \end{equation}
    and
    \begin{equation}\label{eq:est-Dp}
        \int_{S}D_{\mathbf{p}}F^1(x,{y_\varepsilon})\cdot(Dp_{y_\varepsilon}-Du)\mathrm{~d}x.
    \end{equation}
          
     \vskip 10pt

    We first estimate the second term \eqref{eq:est-Dp}. 
    By the divergence theorem, we have
    \begin{align}
        \int_{S}&D_{\mathbf{p}}F^1(x,{y_\varepsilon})\cdot(Dp_{y_\varepsilon}-Du)\mathrm{~d}x\nonumber\\[5pt]
        &=\int_{S}D_{\mathbf{p}}F^1(x,{y_\varepsilon})\cdot D_x(p_{y_\varepsilon}-u)\mathrm{~d}x\label{eq:DpG}\\[5pt]
        &=\int_{\partial S\cap\partial X}(p_{y_\varepsilon}-u)D_{\mathbf{p}}F^1(x,{y_\varepsilon})\cdot\mathbf{n}\mathrm{~d}S-\int_{S}(p_{y_\varepsilon}-u)\div_x\left(D_{\mathbf{p}}F^1(x,{y_\varepsilon})\right)\mathrm{d}x,\nonumber
    \end{align}
    where $\mathbf{n}$ is the unit outer normal vector.
    By \ref{enu:H2} and \ref{enu:H3}, we know
    \begin{equation}\label{eq:F1-est}
        |D_{\mathbf{p}}F^1(x,{y_\varepsilon})|\leq g(y_\varepsilon)\quad
      \text{and}\quad |\div_x\left(D_{\mathbf{p}}F^1(x,{y_\varepsilon})\right)|\leq g(y_\varepsilon).
    \end{equation}
    By $\left|y_{\varepsilon}\right|\leq C\frac{\varepsilon h}{r}$, \eqref{eq:p-u}, \eqref{eq:DpG} and \eqref{eq:F1-est}, we have
    \begin{align}
        \int_{S}D_{\mathbf{p}}F^1(x,{y_\varepsilon})\cdot(Dp_{y_\varepsilon}-Du)\mathrm{~d}x\leq Ch|S|. \label{eq:est-2nd}
    \end{align}
    Here we also used the estimate $|\partial S\cap\partial X|\leq C|S|$, which was proved by Carlier and Lachand-Robert \cite{CL1}, and Chen \cite[P82]{Ch1}.
      \vskip 10pt

    Next, we estimate \eqref{eq:est-q}.
   %The following argument is very similar to the one in \cite[P15-P16]{MRZ}. For readers' convenience, we include the details here.
    For $x=(x_1,x')$, we let $P(x):=(0,x')$ be its projection onto $\{x\in\mathbb R^n\,|\,x_1=0\}$. Choose $K=2\operatorname{diam}(X)/d$, 
    where $d=\operatorname{dist}(X',\partial X)$. Denote by $\frac{1}{K}S$ 
    the dilation of $S$ by a factor $\frac{1}{K}$ with respect to the origin. 
    Hence, $P\left(\frac{1}{K}S\right)+\frac{d}{2}e_1\subset\operatorname{int}X$. Choose $r_0$ sufficiently small depending on $d$ such that 
    $S\subset\left\{x:x_1\leq\frac{d}{2}\right\}$. 
    For $(0,x')\in P\left(\frac{1}{K}S\right)$, we let $l_{x'}$ be the line segment with greater $x_1$ component 
    of the set $(P^{-1}\left(\frac{1}{K}S\right)\cap S)\backslash\left(\frac{1}{K}S\right)$ (see Figure \ref{fig:1}) and write 
    \[l_{x'}=[a_{x'},b_{x'}]\times\{x'\}, \ \text{where} \ b_{x'}>a_{x'}.\] 
    Then the point $(b_{x'},x')$ satisfies $b_{x'}\leq \frac{d}{2}$. Hence, $(b_{x'},x')\in \partial S\cap\operatorname{int}X$. .
    \begin{figure}[ht]
        \centering
        \includegraphics[width=0.75\textwidth]{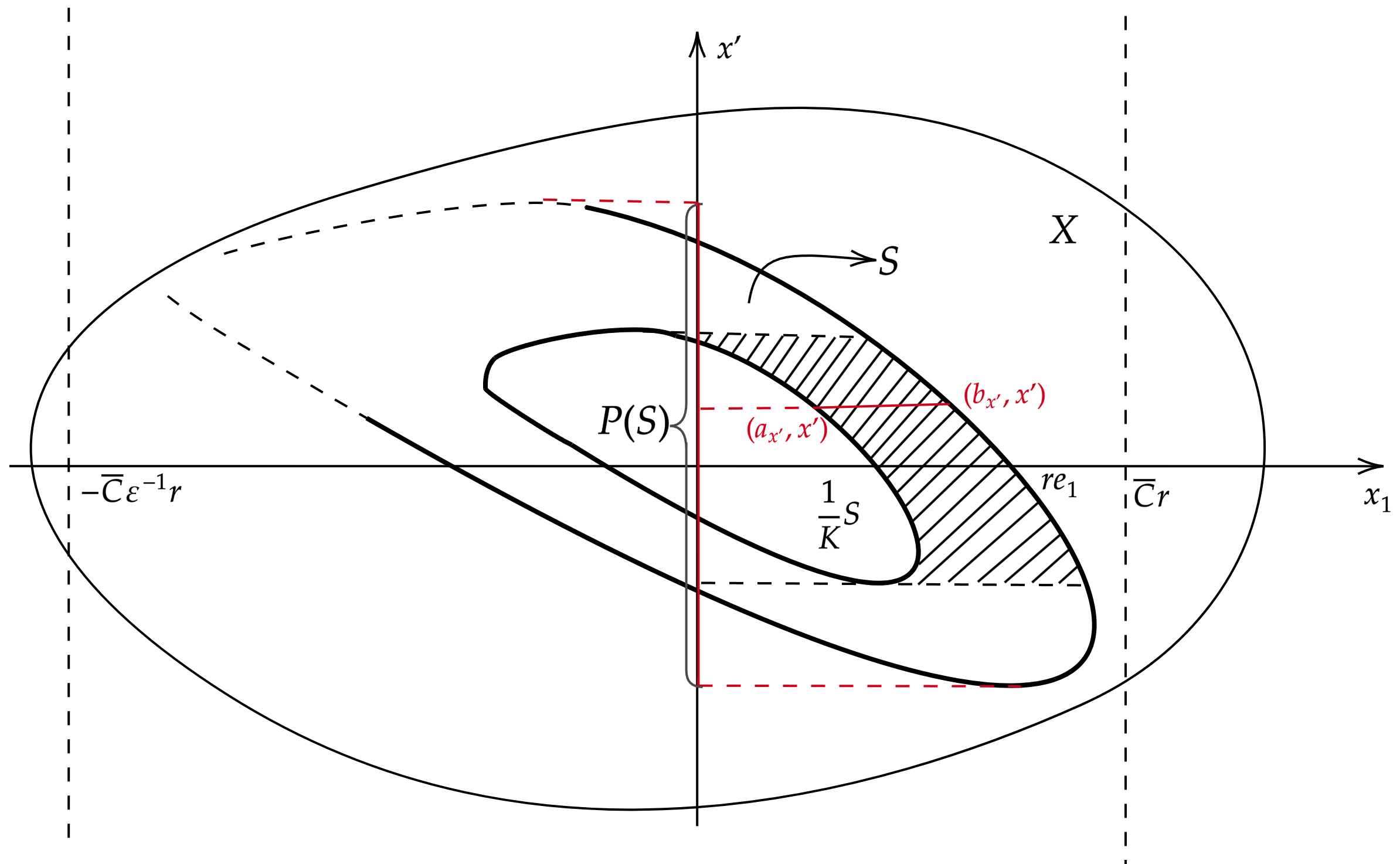}
        \caption{We rescale $S$ by a factor of $1/K$ with respect to the origin and subsequently do the integration in the shadow region. The red horizontal straight line is an example of $l_{x'}$. Combining \eqref{eq:S-p}, it is easy to see that $S\subset [-\Bar{C}\varepsilon^{-1}r,\Bar{C}r]\times P(S)$.}
        \label{fig:1}
    \end{figure}
    
   By \eqref{eq:p-u(0)} and $u(Ka_{x'},Kx')-p_{y_{\varepsilon}}(Ka_{x'},Kx')\leq 0$, we use the convexity of $u-p_{y_{\varepsilon}}$ to obtain
    $$u(a_{x'},x')-p_{y_{\varepsilon}}(a_{x'},x')\leq-\frac{K-1}{K}\left(\frac{3}{4}-C\varepsilon\right)h.$$
    Since $(b_{x'},x')\in \partial S\cap\operatorname{int}X$, it is clear that
    $$u(b_{x'},x')-p_{y_{\varepsilon}}(b_{x'},x')=0.$$
    Then by Jensen's inequality, we have
    \begin{align*}
        \int_{a_{x'}}^{b_{x'}}&|D_{x_1}u(t,x')-D_{x_1}p_{y_{\varepsilon}}(t,x')|^{q}\mathrm{~d}t\\[5pt]
        &\geq\frac{1}{{d_{x'}}^{q-1}}\left(\int_{a_{x'}}^{b_{x'}}D_{x_1}u(t,x')-D_{x_1}p_{y_{\varepsilon}}(t,x')\mathrm{~d}t\right)^{q}\\[5pt]
        &\geq \frac{1}{{d_{x'}}^{q-1}}\left[\frac{K-1}{K}\left(\frac{3}{4}-C\varepsilon\right)h\right]^{q}\\[5pt]
        &\geq \left[\frac{K-1}{K}\left(\frac{3}{4}-C\varepsilon\right)\right]^{q}\frac{h^{q}}{(Cr)^{q-1}},
    \end{align*}
    where $d_{x'}=b_{x'}-a_{x'}\leq Cr$. Hence, it holds
    \begin{eqnarray}
        \int_{S}|Du-Dp_{y_\varepsilon}|^{q}\mathrm{~d}x&\geq&
        \int_{P(\frac{1}{K}S)}\int_{a_{x'}}^{b_{x'}}|D_{x_1}u(t,x')-D_{x_1}p_{y_{\varepsilon}}(t,x')|^{q}\mathrm{~d}t\mathrm{~d}x'\nonumber\\[5pt]
        &\geq&\int_{P(\frac{1}{K}S)}\left[\frac{K-1}{K}\left(\frac{3}{4}-C\varepsilon\right)\right]^{q}\frac{h^{q}}{(Cr)^{q-1}}\mathrm{~d}x'\nonumber\\[5pt]
        &=&\left[\frac{K-1}{K}\left(\frac{3}{4}-C\varepsilon\right)\right]^{q}\frac{h^{q}}{(Cr)^{q-1}}\left|P\left(\frac{1}{K}S\right)\right|\nonumber\\[5pt]
        &=&\frac{(K-1)^{q}\left(\frac{3}{4}-C\varepsilon\right)^{q}}{K^{q+n-1}} \frac{h^{q}}{(Cr)^{q-1}}|P(S)|.\nonumber   
         \end{eqnarray}
By  \eqref{eq:S-p}, we have 
\[\Bar{C}(1+\varepsilon^{-1})r|P(S)|\geq |S|.\]
Therefore, we obtain
\beq\label{eq:est-1st}
 \int_{S}|Du-Dp_{y_\varepsilon}|^{q}\mathrm{~d}x\geq \frac{(K-1)^{q}\left(\frac{3}{4}-C\varepsilon\right)^{q}}{C^{q-1}K^{q+n-1}\Bar{C}(\varepsilon+1)} \frac{\varepsilon h^{q}}{r^{q}}|S|.
\eeq
    Substituting \eqref{eq:grad}, \eqref{eq:est-2nd} and  \eqref{eq:est-1st}  into \eqref{eq:q-sep}, we have
    \begin{eqnarray*}
        \frac{1}{|S|}&&\int_{S}\left(F^1(x,y_b(x,Dp_{y_\varepsilon}))-F^1(x,y_b(x,Du))\right)\mathrm{d}x\\[5pt]
        &&\leq -\frac{\delta}{|S|}\int_{S}|Du-Dp_{y_{\varepsilon}}|^{q}\mathrm{~d}x+\frac{1}{|S|}\int_{S}D_{\mathbf{p}}F^1(x,y_\varepsilon)\cdot(Dp_{y_\varepsilon}-Du)\mathrm{~d}x\\[5pt]
        &&\leq -\frac{\delta(K-1)^{q}\left(\frac{3}{4}-C\varepsilon\right)^{q}}{C^{q-1}K^{q+n-1}\Bar{C}(\varepsilon+1)} \frac{\varepsilon h^{q}}{r^{q}}+Ch.
\end{eqnarray*}
Therefore, there exist $C_1, C_2>0$ depending on $b$, $d$, $q$, $\delta$, and $\|g\|_{L^{\infty}_{loc}(Y)}$ such that
$$\frac{1}{|S|}\int_{S}\left(F^1(x,y_b(x,Dp_{y_\varepsilon}))-F^1(x,y_b(x,Du(x))\right)\mathrm{d}x\leq C_1 h-C_2\frac{h^{q}}{r^{q}},$$
    i.e. \eqref{eq:(3)} holds.
\end{proof}

\begin{rem}\label{rem:G=0}
    \begin{enumerate}
        \item[(i)] Going through the proof of Lemma \ref{lem:key_lem}, we can see that for the energy estimate, 
        we only need to consider the difference between $Du$ and $Dp_y$ (see \eqref{eq:q-convex}). So the subtraction of a $b$-affine function does not affect our argument in the above proof. More precisely,  we have
        \begin{align*}
            &\quad F^1(x,y_b(x,Du))-F^1(x,y_b(x,Dp_y))\\[5pt]
            &\geq D_{\mathbf{p}}F^1(x,y)\cdot(Du-Dp_y)+\delta|Du-Dp_y|^{q}\\[5pt]
             &=D_{\mathbf{p}}F^1(x,y)\cdot\big[(Du-b_x(x,y_0))-(Dp_y-b_x(x,y_0))\big]\\[5pt]
            &\quad\quad\quad+\delta\big|(Du-b_x(x,y_0))-(Dp_y-b_x(x,y_0))\big|^{q}\\[5pt]
            &=D_{\mathbf{p}}F^1(x,y)\cdot(D\tilde{u}-D\tilde{p}_y)+\delta|D\tilde{u}-D\tilde{p}_y|^{q},
        \end{align*}
        where 
        \begin{align*}
            \tilde{u}({x})&=u(x)-[u(x_0)+b(x,y_0)-b(x_0,y_0)],\\[4pt]
            \tilde{p}_{y}({x})&=p_y(x)-[u(x_0)+b(x,y_0)-b(x_0,y_0)].
        \end{align*}
        Hence, if the Lemma \ref{lem:key_lem} holds for transformed quantities, it will also hold for the original functions and coordinates, perhaps with different constants $C_1$, $C_2$. 

        \item[(ii)] In the proof of Lemma \ref{lem:key_lem}, we can actually choose $\varepsilon=\frac{1}{2}$, perhaps with a smaller $r_0$.
        
        \item[(iii)]  From the proof of Lemma \ref{lem:key_lem}, we can see that $C_1$ is identical 0 if  $F^1(x,y_b(x,\mathbf{p}))=F^1(y_b(x,\mathbf{p}))$ and $S\subset\subset X$. Hence, \eqref{eq:(3)} becomes
        \begin{equation}\label{eq:rem-(3)}
            \frac{1}{|S|}\int_{S}\left(F^1(y)-F^1(y_b(x,Du))\right)\mathrm{d}x\leq -C_2\frac{h^{q}}{r^{q}}.
        \end{equation}
    \end{enumerate}
\end{rem}

\subsection{Proof of Theorem \ref{thm:main}}
\label{sec:pf-main}

In this subsection, we will use Lemma \ref{lem:key_lem} to prove Theorem \ref{thm:main}. 

\begin{proof}[Proof of Theorem \ref{thm:main}]
    Fix $X'\subset\subset X$, $x_0\in X'$ and $y_0\in y_b(x_0,Du(x_0))$. First, we show that for any $r$ less than a given $r_0$ (independent of $u$) there exists $C>0$, such that
    \begin{equation}\label{eq:C1a-ineq}
        \sup_{B_r(x_0)}|u(x)-u(x_0)-b(x,y_0)+b(x_0,y_0)|\leq Cr^{1+\frac{1}{q-1}},
    \end{equation}
where $p_0(x):=u(x_0)+b(x,y_0)-b(x_0,y_0)$ is a $b$-support function of $u$ at $x_0$.
    Indeed, let
    $$h=\sup_{B_r(x_0)}(u-p_0).$$
    We assume $h>0$. Otherwise, the proof is finished. Then we choose a $b$-affine function $p_y$ with the associated section $S=\{x: \ u(x)<p_y(x)\}$  by Lemma \ref{lem:key_lem} and set
    $$u_h:=\max\{u,p_y\}.$$
    It is clear that $u_h$ is $b$-convex, then $L(u_h)\geq L(u)$
    since $u$ is a minimizer of \eqref{eq:func}. Note that $u_h$ differs from $u$ only on $S$. Since $p_y$ is a $b$-affine function, we have $y_b(x,Dp_y(x))=y$. Hence, we can deduce from \ref{enu:H2}, \eqref{eq:p-u}, and \eqref{eq:(3)} that
    \begin{align*}
        0&\leq L(u_h)-L(u)\\
        &=\int_{S}\left[(F^1(x,y)+F^0(x,p_y))-(F^1(x,y_b(x,Du))+F^0(x,u))\right]\mathrm{d}x\\
        &=\int_{S}(F^1(x,y)-F^1(x,y_b(x,Du)))\mathrm{~d}x+\int_{S}(F^0(x,p_y)-F^0(x,u))\mathrm{~d}x\\
        &\leq \left(C_1 h-C_2\frac{h^{q}}{r^{q}}+\|\eta\|_{L^\infty_{loc}(\mathbb R)} h\right)|S|,
    \end{align*}
    which gives us that
    $$h\leq Cr^{\frac{q}{q-1}},$$
    i.e.
    $$\sup_{B_r(x_0)}|u-p_0|\leq Cr^{1+\frac{1}{q-1}}.$$
    Next, we show that for any $r$ less than a given $r_0$ there exists
    $$\sup_{B_r(x_0)}|u(x)-u(x_0)-Du(x_0)\cdot(x-x_0)|\leq Cr^{1+\alpha}$$
    for $\alpha=1/(q-1)$ when $q>2$ and $\alpha=1$ when $1<q\leq 2$. Indeed, by Definition \ref{def:y_b} we have $Du(x_0)=b_x(x_0,y_0)$. Then by \eqref{eq:C1a-ineq} and Lagrange's Mean Value Theorem, for any $x$, there exists $\xi$, such that
    \begin{align*}
       |&u(x)-u(x_0)-Du(x_0)\cdot(x-x_0)|\\[4pt]
       &\leq |u(x)-u(x_0)-(b(x,y_0)-b(x_0,y_0))|+|(b(x,y_0)-b(x_0,y_0))-Du(x_0)\cdot(x-x_0)|\\[4pt]
        &=|u(x)-u(x_0)-(b(x,y_0)-b(x_0,y_0))|+|b_x(\xi,y_0)\cdot(x-x_0)-b_x(x_0,y_0)\cdot(x-x_0)|\\[4pt]
        &\leq C|x-x_0|^{1+\frac{1}{q-1}}+\|b_{xx}\|_{L^\infty(\overline{{X}}\times\overline{{Y}})}|x-x_0|^2\\[4pt]
        &\leq C|x-x_0|^{1+\alpha}
    \end{align*}
    for $\alpha=1/(q-1)$ when $q>2$ and $\alpha=1$ when $1<q\leq 2$.
    Then the proof is completed by noting that a $b$-convex function is semi-convex and applying Lemma \ref{lem:C1,a}.
\end{proof}

In the above proof, we used a criterion for $C^{1,\alpha}$ regularity of convex functions, which states that if a convex function separates its supporting planes in a $C^{1,\alpha}$ fashion pointwisely, then it is indeed of class $C^{1,\alpha}$. This lemma can be found in many references, see, for example, in \cite[Lemma A.32]{F} or \cite[Theorem 2.91]{Le3}. For readers' convenience, we include it here.
\begin{lem}[{\cite[Lemma A.32]{F}}]\label{lem:C1,a}
    Let $Z$ be an open convex set satisfying
    $$B_r(\Bar{x})\subset Z\subset B_R(\Bar{x})$$
    for some $0<r\leq R$ and $\Bar{x}\in\mathbb R^n$. Let $u:Z\to\mathbb R$ be a convex function, and assume that there exist constants $K$, $C$, $\varrho>0$ and $\alpha\in (0,1]$ such that the following holds: $u$ is $K$-Lipschitz in $Z$, and for every $x\in Z$ there exists $p_x\in\partial u(x)$ satisfying
    $$u(z)-u(x)-p_x\cdot(z-x)\leq C|z-x|^{1+\alpha},\quad\forall\,z\in Z\cap B_{\varrho}(x).$$
    Then $u\in C^{1,\alpha}(Z)$ with
    $$\|Du\|_{C^\alpha(Z)}\leq \Bar{C}=\Bar{C}\Big(r,R,K,C,\varrho\Big).$$
\end{lem}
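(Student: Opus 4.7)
The plan is to extract both pointwise differentiability of $u$ and a Hölder estimate on $Du$ from playing the one-sided upper bound against the lower bound that convexity automatically provides; the geometric data $K, r, R, \varrho$ enter only through constants.

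First I would show that $\partial u(x) = \{p_x\}$ for every $x \in Z$, so $u$ is differentiable there with $Du(x) = p_x$. If $q \in \partial u(x)$, convexity gives $u(z) - u(x) - q \cdot (z - x) \ge 0$, while the hypothesis gives $u(z) - u(x) - p_x \cdot (z - x) \le C|z - x|^{1+\alpha}$ for $z \in Z \cap B_\varrho(x)$; subtracting yields $(q - p_x) \cdot (z - x) \le C|z - x|^{1+\alpha}$. Testing with $z = x + s(q - p_x)/|q - p_x|$ for arbitrarily small $s > 0$ (allowed because $Z$ is open) and letting $s \to 0$ forces $q = p_x$.

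Next I would prove the pointwise Hölder bound on $Du$. For $x, y \in Z$, set $v := Du(x) - Du(y)$, $d := |x - y|$, and, assuming $v \ne 0$, $e := v/|v|$. For $t > 0$ with $y + te \in Z$ and $t < \varrho$, the upper bound at $y$ applied to $z = y + te$ reads
\[
u(y + te) - u(y) - t\, Du(y) \cdot e \le C t^{1+\alpha},
\]
while convexity at $x$ gives $u(y + te) \ge u(x) + Du(x) \cdot (y - x) + t\, Du(x) \cdot e$. Subtracting and absorbing the term $u(x) - u(y) - Du(x) \cdot (x - y)$ via its lower bound $-C d^{1+\alpha}$, obtained from the hypothesis at $x$ tested at $z = y$, produces
\[
t|v| \le C\, d^{1+\alpha} + C\, t^{1+\alpha},
\]
and choosing $t = d$ yields $|Du(x) - Du(y)| \le 2C\, d^{\alpha}$.

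Finally, I would globalize by dealing with the admissibility conditions $y + de \in Z$ and $d < \varrho$. The case $d \ge \varrho$ is immediate from the Lipschitz bound $|v| \le 2K$, which is trivially $\le 2K(d/\varrho)^{\alpha}$. The case where the segment $[y, y + de]$ exits $Z$ is handled by the symmetric test using the bound at $x$ with point $z = x - te$ together with convexity at $y$, which produces the identical scalar inequality under the dual admissibility $x - te \in Z$. The main obstacle is precisely this geometric bookkeeping: verifying, uniformly over $x, y$, that at least one of the two dual tests is admissible with a test length comparable to $d$. Here one invokes the sandwich $B_r(\bar x) \subset Z \subset B_R(\bar x)$ together with convexity of $Z$ to replace $e$ by a nearby direction $e'$ inside the inward cone at $y$ (or $x$), at the cost of a factor depending only on $r/R$ in front of $|v|$. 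Bundling all the constants then gives $\|Du\|_{C^{\alpha}(Z)} \le \bar C(r, R, K, C, \varrho)$.
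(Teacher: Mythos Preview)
The paper does not supply its own proof of this lemma; it is quoted verbatim from \cite[Lemma A.32]{F} (see also \cite[Theorem 2.91]{Le3}) and included ``for readers' convenience,'' with the argument deferred entirely to those references. There is therefore no in-paper proof to compare against.

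That said, your proposal is essentially the standard argument for this criterion and is correct in outline. The differentiability step (showing $\partial u(x)=\{p_x\}$ by playing the one-sided upper bound against the subgradient inequality and letting the test radius go to zero) is clean. The core H\"older estimate---combining the upper bound at $y$ evaluated at $z=y+te$ with the subgradient inequality at $x$ evaluated at the same point, then choosing $t=d$---is exactly the right mechanism and yields $|Du(x)-Du(y)|\le 2C\,|x-y|^\alpha$ whenever the test point $y+de$ lies in $Z\cap B_\varrho(y)$ and $|x-y|<\varrho$. Your handling of the residual geometric issue (neither $y+de$ nor $x-de$ need lie in $Z$) is only sketched: the tilting of $e$ into the inward cone furnished by $B_r(\bar x)\subset Z$ and the convexity of $Z$ does work, but you should make explicit that for any unit vector $e$ one can find $e'$ with $e'\cdot e\ge c(r/R)>0$ and $y+te'\in Z$ for $0<t\le c'(r/R)\,d$, so that the scalar inequality becomes $c(r/R)\,t|v|\le C d^{1+\alpha}+C t^{1+\alpha}$; alternatively one can observe that at least one of $y+\tfrac{d}{2}e$, $x-\tfrac{d}{2}e$ lies on the segment $[x,y]\subset Z$ when $d$ is small. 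With that detail filled in, the proof is complete and matches the approach in the cited references.
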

\vskip 20pt

\section{Some applications}\label{sec:app}

\vskip 12pt

In this section, we will give some applications of Theorem \ref{thm:main}. These applications will cover topics such as the regularity of the Rochet-Chon\'e model with $q$-power cost $(q>1)$ and an interesting corollary of Lemma \ref{lem:key_lem}. We'll also present two examples of $b(x,y)$ which are slightly different from $x\cdot y$ and then end this section.

\subsection{The Rochet-Chon\'e model}\label{sec:RC-q}
In this subsection, we assume that $u$ is convex in the classical sense, i.e. $b(x,y)=x\cdot y$. As stated in the introduction, the Rochet-Chon\'e model with $q$-power cost $(q>1)$ is to minimization problem
\begin{equation}\label{eq:RC-q}
    \min_{u\in U_0}\int_{X}\left(\frac{1}{q}|Du|^q-x\cdot Du+u\right)\gamma(x)\mathrm{~d}x,
\end{equation}
where 
$$U_0:=\{u\,:\,X\to\mathbb R\,|\,u\text{ is convex},\,u(x)\geq a_0+x\cdot y_0\}$$
for a constant $a_0$, and a constant vector $y_0\in\mathbb R^n$.

\begin{thm}\label{thm:RC-q}
Assume that $\gamma\in C^{0,1}(X)$ and there exists $\lambda\in \mathbb R$ such that $\gamma(x)\geq \lambda>0$. 
    Let $u$ be a minimizer of the Rochet-Chon\'e model with $q$-power cost \eqref{eq:RC-q} in $U_0$, then $u\in C^{1,\alpha}_{\text{loc}}(X)$, where $\alpha=\frac{1}{q-1}$ for $q>2$ and $\alpha=1$ for  $1<q\leq 2$.
\end{thm}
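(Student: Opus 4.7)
The plan is to write the Rochet-Chon\'e functional in the form \eqref{eq:func} and invoke Theorem \ref{thm:main}. Taking $b(x,y) = x\cdot y$ (so that $y_b(x,\mathbf{p}) = \mathbf{p}$ by Remark \ref{rem:y_b}), I decompose the integrand as
\[
F^1(x,y) = \Bigl(\tfrac{1}{q}|y|^q - x\cdot y\Bigr)\gamma(x), \qquad F^0(x,z) = z\gamma(x),
\]
so that the integrand of \eqref{eq:RC-q} becomes $F^1(x, y_b(x, Du)) + F^0(x, u)$. Hypotheses \ref{en:B0}-\ref{en:B3} are trivial for the bilinear pairing, so the remaining work is to verify \ref{enu:H1}-\ref{enu:H3}.

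As a preliminary step, I would establish an a priori local Lipschitz bound $|Du| \leq M$ on $X' \subset\subset X$: convexity, the linear lower bound $u \geq a_0 + x\cdot y_0$ built into $U_0$, and the energy inequality $L(u) \leq L(a_0 + x\cdot y_0)$ combine to give local $L^\infty$ control on $u$, from which convexity produces a local Lipschitz estimate. This confines $y = Du$ to a bounded set $Y' \subset \mathbb{R}^n$. Hypothesis \ref{enu:H2} is then immediate because $|D_z F^0(x,z)| = \gamma(x) \leq \|\gamma\|_{L^\infty(X)}$, and \ref{enu:H3} follows by direct differentiation: $D_y F^1 = (|y|^{q-2}y - x)\gamma(x)$ is bounded on $X \times Y'$, and since $\gamma \in C^{0,1}(X)$ the mixed derivative $D_{x_i y_i} F^1$ is similarly bounded, so both are dominated by a locally bounded function $g$.

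The key step is \ref{enu:H1}. For $q > 2$, the $q$-power function $|\mathbf{p}|^q/q$ is globally $(q,\delta)$-strongly convex by Proposition \ref{prop:q-convex}; since $\gamma \geq \lambda > 0$ and $-x\cdot\mathbf{p}$ is linear in $\mathbf{p}$, $F^1$ inherits this property with constant $\delta\lambda$. For $1 < q \leq 2$, the function $|\mathbf{p}|^q/q$ is not globally $(q,\delta)$-strongly convex because its Hessian $(q-1)|\mathbf{p}|^{q-2}$ degenerates at infinity, and this is the principal obstacle; the resolution is precisely the a priori gradient bound from the previous paragraph, which restricts the analysis to the bounded set $Y'$, on which Proposition \ref{prop:q-convex} supplies the required sharpened strong-convexity estimate. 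With \ref{enu:H1}-\ref{enu:H3} verified, Theorem \ref{thm:main} yields $u \in C^{1,\alpha}_{\text{loc}}(X)$ with $\alpha = 1/(q-1)$ for $q > 2$ and $\alpha = 1$ for $1 < q \leq 2$, completing the proof.
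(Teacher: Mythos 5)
Your decomposition of the integrand, the verification of \ref{enu:H2}--\ref{enu:H3}, and the use of Proposition \ref{prop:q-convex} to check \ref{enu:H1} follow exactly the paper's proof, and for $q\geq 2$ your argument is complete as written (note that for $q=2$ the function $|\mathbf{p}|^2/2$ is globally $(2,\tfrac12)$-strongly convex, so no gradient bound is needed there either; also, the gradient bound is never needed for \ref{enu:H3}, since $g(\mathbf{p})=C_0(1+|\mathbf{p}|^{q-1})$ is already locally bounded).

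The gap is in the case $1<q<2$, where you replace the paper's standing assumption that admissible functions have slopes confined to a bounded set $\overline{Y}$ by an a priori \emph{interior} Lipschitz bound on $X'\subset\subset X$. That is not enough to run the machinery: hypotheses \ref{enu:H1} and \ref{enu:H3} in Theorem \ref{thm:main} are imposed for all $x\in\overline{X}$ and all admissible slopes, and, more concretely, in the proof of Lemma \ref{lem:key_lem} the strong-convexity inequality is integrated over the section $S=\{x:u(x)<p_y(x)\}$, which need not be compactly contained in $X$ (the proof explicitly estimates the boundary contribution on $\partial S\cap\partial X$). Since for $1<q<2$ the modulus in \eqref{eq:1<q<2} degenerates like $(1+|\mathbf{p_1}|+|\mathbf{p_2}|)^{q-2}$ as the gradients grow, you need $|Du|$ bounded on all of $S$, i.e.\ up to $\partial X$, with a uniform constant; a Lipschitz estimate that deteriorates near $\partial X$ does not yield a uniform $\delta$ in \ref{enu:H1}, so your ``resolution'' of the principal obstacle does not cover the region where the estimate is actually used. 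The paper sidesteps this by working in the $b$-convex framework with $Y$ open and bounded (this is what ``we also used the assumption that $Y$ is bounded'' refers to): every subgradient of an admissible $u$ then lies in $\overline{Y}$ automatically, and \ref{enu:H1} holds with exponent $2$ and $\delta$ depending only on $q$, $\lambda$ and $\operatorname{diam}Y$. To make your route rigorous you must either adopt that convention on $U_0$ (slopes constrained to $\overline{Y}$), or prove a global, up-to-the-boundary gradient bound for minimizers (e.g.\ by a truncation/comparison argument), which your sketch does not supply.
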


\begin{proof}
From \eqref{eq:RC-q}, we know that
$$F^1(x,\mathbf{p})=\left(|\mathbf{p}|^q/q-x\cdot\mathbf{p}\right)\gamma(x)\quad\text{and}\quad F^0(x,z)=z\gamma(x).$$
Therefore, to apply Theorem \ref{thm:main} to the case for the Rochet-Chon\'e model with $q$ power costs, we need to check that it satisfies the assumptions listed in the theorem. Assume $X$, $Y\in\mathbb R^n$ are open and bounded, it is trivial that $b(x,y)=x\cdot y$ satisfies \ref{en:B0}-\ref{en:B3}, so it suffices to guarantee that $F^1(x,\mathbf{p})$ and $F^0(x,z)$ satisfy \ref{enu:H1}-\ref{enu:H3}. Note that in this situation, we have $y_b(x,\mathbf{p})=\mathbf{p}$.

We Then there are
$$\left|D_Z F^0(x,z)\right|=\gamma(x)\leq M,\quad\text{for all }x\in X \text{ and }z\in\mathbb R,$$
which satisfies \ref{enu:H2}, and
$$|D_{\mathbf{p}}F^1(x,\mathbf{p})|=\big||\mathbf{p}|^{q-2}\mathbf{p}-x\big|\gamma(x)\leq C_0\left(|\mathbf{p}|^{q-1}+1\right),
$$
$$|D_{x_ip_i}F^1(x,\mathbf{p})|=\big|D_{x_i}\gamma(x)(|\mathbf{p}|^{q-2}p_i-x_i)-\gamma(x)\big|\leq C_0\left(|\mathbf{p}|^{q-1}+1\right),$$
which satisfies \ref{enu:H3}. For \ref{enu:H1}, we need some elementary inequalities concerning the convexity of $|\mathbf{p}|^q$, i.e. the following Proposition \ref{prop:q-convex}. Precisely, when $q\geq 2$, using \eqref{eq:q> 2} we have
\begin{align*}
    F^1(x,\mathbf{p_1})-F^1(x,\mathbf{p_2})&=\left(|\mathbf{p_1}|^q/q-x\cdot\mathbf{p_1}\right)\gamma(x)-\left(|\mathbf{p_2}|^q/q-x\cdot\mathbf{p_2}\right)\gamma(x)\\
    &=\frac{1}{q}(|\mathbf{p_1}|^q-|\mathbf{p_2}|^q)\gamma(x)-x\cdot(\mathbf{p_1}-\mathbf{p_2})\gamma(x)\\
    &\geq \left(|\mathbf{p_2}|^{q-2}\mathbf{p_2}-x\right)\cdot(\mathbf{p_1}-\mathbf{p_2})\gamma(x)+\lambda c(q)|\mathbf{p_1}-\mathbf{p_2}|^q\\
    &=D_{\mathbf{p}}F^1(x,\mathbf{p_2})\cdot(\mathbf{p_1}-\mathbf{p_2})+\lambda c(q)|\mathbf{p_1}-\mathbf{p_2}|^q.
\end{align*}
When $1<q<2$, by \eqref{eq:1<q<2} we have
\begin{align*}
    F^1(x,\mathbf{p_1})-F^1(x,\mathbf{p_2})&=\left(|\mathbf{p_1}|^q/q-x\cdot\mathbf{p_1}\right)\gamma(x)-\left(|\mathbf{p_2}|^q/q-x\cdot\mathbf{p_2}\right)\gamma(x)\\
    &=\frac{1}{q}(|\mathbf{p_1}|^q-|\mathbf{p_2}|^q)\gamma(x)-x\cdot(\mathbf{p_1}-\mathbf{p_2})\gamma(x)\\
    &\geq D_{\mathbf{p}}F^1(x,\mathbf{p_2})\cdot(\mathbf{p_1}-\mathbf{p_2})+\lambda c(q)|\mathbf{p_1}-\mathbf{p_2}|^2(1+|\mathbf{p_1}|+|\mathbf{p_2}|)^{q-2}\\
    &\geq D_{\mathbf{p}}F^1(x,\mathbf{p_2})\cdot(\mathbf{p_1}-\mathbf{p_2})+\delta|\mathbf{p_1}-\mathbf{p_2}|^2.
\end{align*}
In the last inequality, we also used the assumption that $Y$ is bounded. 
\end{proof}

In the previous discussion, we used some elementary inequalities concerning the convexity of $|\mathbf{p}|^q$, which is essential for our arguments. These inequalities are widely known in the study of the $p$-Laplacian equation (see \cite[Section 10]{Lin} for example), and it implies that the convexity of the function $|\mathbf{p}|^q$ can be precisely quantified. For completeness and the reader's convenience, we provide a proof here, as we haven't found one in the literature.
\begin{prop}\label{prop:q-convex}
    The convexity of the function $|\cdot|^q:\mathbb R^n\to\mathbb R$ can be sharpened in the following sense:
    for $x, y\in\mathbb R^n$,
%\beq\label{eq:q> 2}
%|y|^q\geq |x|^q+q|x|^{q-2}x\cdot(y-x)+\begin{cases}
%c(q)|y-x|^q,& q\geq 2,\\[5pt]
%\frac{c(q)|y-x|^2}{(1+|x|+|y|)^{q-2}},& 1<q<2,
%\end{cases}
%\eeq
%where $c(q)$ denotes a positive constant depends only on $q$. 
    \begin{align}
        |y|^q&\geq |x|^q+q|x|^{q-2}x\cdot(y-x)+c(q)|y-x|^q,\quad q\geq 2,\label{eq:q> 2}\\[5pt]
        |y|^q&\geq |x|^q+q|x|^{q-2}x\cdot(y-x)+c(q)|y-x|^2(1+|x|+|y|)^{q-2},\quad 1<q<2,\label{eq:1<q<2}
    \end{align}
    where $c(q)$ denotes a positive constant depends only on $q$. 
\end{prop}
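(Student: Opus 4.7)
The plan is to prove both inequalities from Taylor's formula with integral remainder applied to $f(t) := |x + t(y-x)|^q$ on $[0,1]$. Writing $v := y - x$ and $x_t := x + tv$, a direct computation gives
$$f''(t) = q|x_t|^{q-2}|v|^2 + q(q-2)|x_t|^{q-4}(x_t \cdot v)^2 \qquad (x_t \neq 0),$$
and since $q > 1$ the function $f''$ is integrable across any isolated zero of $x_t$ (near such a zero it behaves like $|t - t^*|^{q-2}$, which is integrable for $q > 1$), so
$$|y|^q - |x|^q - q|x|^{q-2}x\cdot(y-x) = \int_0^1 (1-t)\,f''(t)\, dt.$$
The two cases of the proposition differ only in how one bounds this integral from below.

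For $1 < q < 2$, the factor $q-2$ is negative, so the Cauchy-Schwarz inequality $(x_t \cdot v)^2 \leq |x_t|^2 |v|^2$, multiplied by $q(q-2)|x_t|^{q-4}$, reverses direction and yields
$$f''(t) \geq q|x_t|^{q-2}|v|^2 + q(q-2)|x_t|^{q-2}|v|^2 = q(q-1)|x_t|^{q-2}|v|^2.$$
The triangle inequality gives $|x_t| \leq (1-t)|x| + t|y| \leq 1 + |x| + |y|$, so using $q - 2 < 0$ once more, $|x_t|^{q-2} \geq (1+|x|+|y|)^{q-2}$. Integrating $(1-t)$ over $[0,1]$ contributes a factor $1/2$ and produces \eqref{eq:1<q<2} with $c(q) = q(q-1)/2$.

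For $q \geq 2$, both summands in $f''$ are nonnegative, so $f''(t) \geq q|x_t|^{q-2}|v|^2$. Decomposing $x = \alpha e + b$ with $e := v/|v|$ and $b \perp e$, one has $|x_t| \geq |\alpha + t|v||$, and the monotonicity of $w \mapsto w^{q-2}$ on $[0,\infty)$ for $q \geq 2$ upgrades this to $|x_t|^{q-2} \geq |\alpha + t|v||^{q-2}$. Setting $\beta := \alpha/|v|$, the problem reduces to establishing the uniform lower bound
$$I(\beta) := \int_0^1 (1-t)|\beta + t|^{q-2}\, dt \geq c > 0 \qquad (\beta \in \mathbb R),$$
after which $c(q) := q \inf_\beta I(\beta)$ does the job. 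The function $I$ is continuous on $\mathbb R$ by dominated convergence and strictly positive since its integrand is nonnegative and positive almost everywhere; for $|\beta| \geq 1$ one has $I(\beta) \geq (|\beta|-1)^{q-2}/2$, which tends to $+\infty$ as $|\beta| \to \infty$ when $q > 2$ (while $I \equiv 1/2$ in the borderline case $q = 2$). Hence $I$ attains a positive minimum.

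The main obstacle is this last compactness-style step, namely the uniform positivity of $I(\beta)$ on all of $\mathbb R$ when $q > 2$. Every other step reduces to algebraic manipulation of the Taylor identity and the sign reversal in Cauchy-Schwarz that is available when $q - 2 < 0$.
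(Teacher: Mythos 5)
Your proposal is correct and follows essentially the same route as the paper: Taylor's formula with integral remainder for $t\mapsto|x+t(y-x)|^{q}$, the Cauchy--Schwarz sign reversal giving $f''\geq q(q-1)|x_t|^{q-2}|y-x|^{2}$ when $1<q<2$, and a uniform positive lower bound on the remainder integral $\int_0^1(1-t)\,|\cdot|^{q-2}\,dt$ when $q\geq 2$. The only divergence is in that last step, where you project $x$ onto the direction $e=(y-x)/|y-x|$ and obtain the bound from continuity, strict positivity and coercivity of the scalar function $I(\beta)$, whereas the paper keeps the vector parameter $\widetilde{x}=x/|y-x|$ and splits explicitly into $|\widetilde{x}|<\delta$ and $|\widetilde{x}|\geq\delta$; both arguments are valid, and your observation that $f''$ remains integrable across a zero of $x+t(y-x)$ when $1<q<2$ addresses a point the paper passes over silently.
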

\begin{proof}
    The proof is based on a direct calculation. For $\forall\,x,y\in\mathbb R^n$, we set
    $$F(t):=\left|ty+(1-t)x\right|^q,\quad t\in[0,1].$$
    Then we know that
    \begin{align}
        F'(t)&=q\left|ty+(1-t)x\right|^{q-2}(ty+(1-t)x)\cdot(y-x),\nonumber\\[5pt]
        F''(t)&=q(q-2)\left|ty+(1-t)x\right|^{q-4}\left[(ty+(1-t)x)\cdot(y-x)\right]^2 \label{eq:2nd-d}\\
        &\quad+q\left|ty+(1-t)x\right|^{q-2}|y-x|^2.\nonumber
    \end{align}
We discuss the two cases separately.
    
    When $q\geq 2$, the first term in \eqref{eq:2nd-d} is nonnegative, then 
    $$F''(t)\geq q\left|ty+(1-t)x\right|^{q-2}|y-x|^2.$$
    For $x=y$, \eqref{eq:q> 2} is trivial. For $x\neq y$, by Taylor's expansion we have
    \begin{align*}
        |y|^q&=F(1)=F(0)+F'(0)+\int_{0}^{1}F''(t)(1-t)\mathrm{~d}t\\
        &\geq|x|^q+q|x|^{q-2}x\cdot(y-x)+q|y-x|^2\int_{0}^{1}\left|ty+(1-t)x\right|^{q-2}(1-t)\mathrm{~d}t\\
        &=|x|^q+q|x|^{q-2}x\cdot(y-x)+q|y-x|^2\int_{0}^{1}\left|x+t(y-x)\right|^{q-2}(1-t)\mathrm{~d}t\\
        &=|x|^q+q|x|^{q-2}x\cdot(y-x)+q|y-x|^q\int_{0}^{1}\left|\frac{x}{|y-x|}+t\frac{y-x}{|y-x|}\right|^{q-2}(1-t)\mathrm{~d}t.
    \end{align*}
    Denote $\widetilde{x}=\frac{x}{|y-x|}$ and $e=\frac{y-x}{|y-x|}$. It suffices to show that there exists a $c_0>0$ such that
    \begin{equation}\label{eq:low-bd}
        \int_{0}^{1}\left|\widetilde{x}+te\right|^{q-2}(1-t)\mathrm{~d}t\geq c_0.
    \end{equation}
    Since
    $$\lim_{|\widetilde{x}|\to 0}\int_{0}^{1}\left|\widetilde{x}+te\right|^{q-2}(1-t)\mathrm{~d}t=\int_{0}^{1}t^{q-2}(1-t)\mathrm{~d}t=\frac{1}{q(q-1)},$$
    we know that there exists a $0<\delta<1$ such that for all $|\widetilde{x}|<\delta$, there is
    $$\int_{0}^{1}\left|\widetilde{x}+te\right|^{q-2}(1-t)\mathrm{~d}t>\frac{1}{2q(q-1)}.$$
    On the other hand, for all $|\widetilde{x}|\geq \delta$ there is
    \begin{align*}
        \int_{0}^{1}\left|\widetilde{x}+te\right|^{q-2}(1-t)\mathrm{~d}t&\geq\int_{0}^{\delta}\left|\widetilde{x}+te\right|^{q-2}(1-t)\mathrm{~d}t\\
        &\geq\int_{0}^{\delta}\left(|\widetilde{x}|-t\right)^{q-2}(1-t)\mathrm{~d}t\\
        &\geq\int_{0}^{\delta}\left(\delta-t\right)^{q-2}(1-t)\mathrm{~d}t>0.
    \end{align*}
    Let
    $$c_0=\min\left\{\frac{1}{2q(q-1)},\int_{0}^{\delta}\left(\delta-t\right)^{q-2}(1-t)\mathrm{~d}t\right\}.$$
    We have shown \eqref{eq:low-bd} is valid,
    which implies \eqref{eq:q> 2} holds with $c(q)=qc_0>0$.

    When $1<q<2$, by the Cauchy-Schwarz inequality, we first have
    $$(ty+(1-t)x)\cdot(y-x)\leq |ty+(1-t)x||y-x|.$$
    Then the first term in \eqref{eq:2nd-d} is no less than 
    $$q(q-2)|ty+(1-t)x|^{q-2}|y-x|^2.$$
    So we know that
    $$F''(t)\geq q(q-1)\left|ty+(1-t)x\right|^{q-2}|y-x|^2.$$
    Similarly, by Taylor's expansion, we have
    \begin{align*}
        |y|^q&=F(1)=F(0)+F'(0)+\int_{0}^{1}F''(t)(1-t)\mathrm{~d}t\\
        &\geq|x|^q+q|x|^{q-2}x\cdot(y-x)+q(q-1)|y-x|^2\int_{0}^{1}\left|ty+(1-t)x\right|^{q-2}(1-t)\mathrm{~d}t.
    \end{align*}
    By the triangle inequality, $|ty+(1-t)x|\leq |y|+|x|$. Hence, when $1<q<2$, we have
    $$\left|ty+(1-t)x\right|^{q-2}\geq(1+|x|+|y|)^{q-2},$$
    which implies \eqref{eq:1<q<2} holds with $c(q)=\frac{1}{2}q(q-1)$.
\end{proof}

\begin{rem}
It is easy to see that when $q=1$ in \eqref{eq:RC-q}, the arguments in this subsection fails seen $c(q)=0$ in \eqref{eq:1<q<2}. 
However, with a slightly stronger convexity on the integrand in \eqref{eq:RC-q} we can still ensure the regularity of the minimizers. 
In particular, by considering $(|\mathbf{p}|\ln(1+|\mathbf{p}|)-x\cdot\mathbf{p}+u)\gamma(x)$ instead of $(|\mathbf{p}|-x\cdot\mathbf{p}+u)\gamma(x)$, we find that the minimizer of \eqref{eq:RC-q} remains in $C^{1,1}_{{loc}}(X)$. In fact, by the same method of Proposition \ref{prop:q-convex},
one can show that $|\mathbf{p}|\ln(1+|\mathbf{p}|)$ satisfies a similar inequality to the one for $q>1$:
$$|y|\ln(1+|y|)\geq |x|\ln(1+|x|)+\left(\ln(1+|x|)+\frac{|x|}{1+|x|}\right)\frac{x}{|x|}\cdot(y-x)+\frac{|y-x|^2}{2(1+|x|+|y|)}.$$
\end{rem}

\subsection{An interesting corollary of Lemma \ref{lem:key_lem}}
 If we let $F^1(x,\mathbf{p})=|\mathbf{p}|^q$, from (iii) of Remark \ref{rem:G=0} and the arguments in Subsection \ref{sec:RC-q}, we know that \eqref{eq:rem-(3)} becomes
$$\frac{1}{|S|}\int_{S}\left(|y|^q-|Du|^q\right)\mathrm{d}x\leq -C_2\frac{h^{\widetilde{q}}}{r^{\widetilde{q}}},$$
where $\widetilde{q}=q$ for $q\geq 2$ and $\widetilde{q}=2$ for $1<q<2$.
Then we can obtain an interesting corollary of Lemma \ref{lem:key_lem}, which is also mentioned in \cite{CL} for the case $q=2$. 

\begin{cor}\label{cor:ruled}
   Let $F^1(x,\mathbf{p})=|\mathbf{p}|^q$ and $F^0(x,z)=f(x)z$ in \eqref{eq:func}. Let $u$ be a convex minimizer of \eqref{eq:func}. Then a non-trivial section of $u$ cannot be contained in the region where $f\leq 0$. In particular, $u$ is a ruled surface in the region where $f\leq 0$.
\end{cor}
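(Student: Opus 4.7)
The plan is to argue by contradiction using the competitor machinery behind Lemma \ref{lem:key_lem}, in the sharpened form supplied by Remark \ref{rem:G=0}(iii). When $F^1(x,\mathbf{p})=|\mathbf{p}|^q$ the integrand depends only on $y$, so the Remark replaces the $C_1 h$ term in \eqref{eq:(3)} by $0$ and yields the strictly negative bound
$$\frac{1}{|S|}\int_{S}\big(|y|^q-|Du|^q\big)\,\mathrm{d}x\leq -C_2\frac{h^{\widetilde{q}}}{r^{\widetilde{q}}}$$
for every section $S=\{u<p_{y_\varepsilon}\}\subset\subset X$ output by the Lemma, with $\widetilde{q}=q$ for $q\geq 2$ and $\widetilde{q}=2$ for $1<q<2$.

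Suppose toward a contradiction that there is a non-trivial section $S^*=\{u<p^*\}\subset\subset X$ of the convex minimizer $u$ contained in $\{f\leq 0\}$, where $p^*$ is affine and $|S^*|>0$. The first step is to locate a point $x_0\in S^*$ at which $u$ is not locally affine. If $u$ were affine on a ball around every point of $S^*$, the local affine pieces would glue across overlaps and, by connectedness of the open convex set $S^*$, $u$ would coincide with a single affine function on all of $S^*$; continuity would then force $u=p^*$ on $\overline{S^*}$, contradicting $u<p^*$ inside $S^*$. Hence such an $x_0$ exists, and for any supporting hyperplane $p_0$ of $u$ at $x_0$ one has $h_r:=\sup_{B_r(x_0)}(u-p_0)>0$ for every $r>0$.

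Next I would apply Lemma \ref{lem:key_lem} at $x_0$ with $y_0=Dp_0$, choosing $r$ so small that the section $S=\{u<p_{y_\varepsilon}\}$ produced by the Lemma lies compactly inside $S^*\cap X$. This is legitimate because the proof of the Lemma yields the containment $S\subset[-\bar C\varepsilon^{-1}r,\bar C r]\times P(S)$, so that (fixing $\varepsilon=1/2$ as in Remark \ref{rem:G=0}(ii)) the diameter of $S$ tends to $0$ as $r\to 0$, while $x_0$ is interior to both $S^*$ and $X$. The competitor $u_*:=\max(u,p_{y_\varepsilon})$ is then convex and still lies in $U_0$, and the minimality of $u$ combined with the fact that $u_*$ coincides with $u$ outside $S$ gives
$$0\leq L(u_*)-L(u)=\int_{S}\big(|y_\varepsilon|^q-|Du|^q\big)\,\mathrm{d}x+\int_{S}f(x)\big(p_{y_\varepsilon}-u\big)\,\mathrm{d}x.$$
The first integral is strictly negative by the displayed estimate above; the second is non-positive because $f\leq 0$ on $S\subset\{f\leq 0\}$ and $p_{y_\varepsilon}>u$ on $S$. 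This contradiction proves the first assertion.

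For the ruled-surface conclusion: if some $x\in\operatorname{int}\{f\leq 0\}$ admitted no affine segment of the graph of $u$ through it, then for any supporting plane $p$ of $u$ at $x$ and any small $\eta>0$ the set $\{u<p+\eta\}$ would be a non-trivial section containing $x$, and for $\eta$ small enough it would be compactly contained in $\operatorname{int}\{f\leq 0\}\cap X$, violating the first assertion. The main technical point of the whole argument is the choice of $r$ ensuring $S\subset\subset S^*$, which is taken care of by the transverse size estimate for $S$ extracted from the proof of Lemma \ref{lem:key_lem}.
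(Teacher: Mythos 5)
Your overall strategy (contradiction, competitor $u_*=\max(u,p_{y_\varepsilon})$, the sharpened estimate \eqref{eq:rem-(3)} from Remark \ref{rem:G=0}(iii), and the sign of $\int_S f\,(p_{y_\varepsilon}-u)$) is the same as the paper's, but the localization step that makes the argument work has a genuine gap. You claim that, fixing $\varepsilon$, the section $S$ produced by Lemma \ref{lem:key_lem} has diameter tending to $0$ as $r\to 0$ because $S\subset[-\bar C\varepsilon^{-1}r,\bar Cr]\times P(S)$. But \eqref{eq:S-p} is only a slab estimate: it controls the extent of $S$ in the $e_1$ direction, while the transverse projection $P(S)$ is not controlled at all by $r$. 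In fact, in exactly the regime this corollary is about ($u$ ruled, or with a nontrivial contact set), the sections $\{u<p_{y_\varepsilon}\}$ are long thin slabs whose transverse size does not shrink as $r\to 0$. Choosing $x_0$ merely as a point where $u$ is not locally affine does not repair this: the contact set of the supporting plane at such an $x_0$ can be a segment that leaves $S^*$ (on the part of the contact set where the affine function $p^*-l_{x_0}$ becomes nonpositive), and then every section produced by the lemma contains points outside $\{f\leq 0\}$, no matter how small $r$ is. There the term $\int_S f\,(p_{y_\varepsilon}-u)$ can be positive, and the contradiction evaporates; the compact containment $S\subset\subset X$ needed for Remark \ref{rem:G=0}(iii) is likewise not guaranteed.

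The paper closes precisely this gap with an extreme-point argument rather than a diameter bound. One picks the supporting plane at a minimum point $x'$ of $u-p^*$ over the assumed section $S^*\subset\{f\leq 0\}$; its contact set $\mathcal T_{x'}$ then lies entirely inside $S^*$, is compact and convex, and hence has an extreme point $x_0$ interior to $\{f\leq 0\}$. Working at such an $x_0$, one has $S\subset\{u<l_{x_0}+h\}$ (this follows from Loeper's maximum principle exactly as in the proof of part \ref{enu:Q2} of the lemma: at any point where the $b$-support at $re_1$ exceeds $l_{x_0}$ one checks $p_{y_\varepsilon}\leq u$, so such points are not in $S$), and the nested open sets $\{u<l_{x_0}+h\}$ shrink to the contact set $\mathcal T_{x'}\subset S^*$ as $r$ (hence $h$) tends to $0$; by compactness they are eventually contained in $S^*\subset\{f\leq 0\}$ and compactly in $X$. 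That is the containment your argument needs and does not establish. Your final paragraph deducing the ruled-surface statement from the first assertion is fine in spirit (and matches the paper's concluding remark), but it rests on the first assertion, so the proof as written is incomplete until the localization is fixed along the lines above.
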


Before presenting the proof, we first review the definition of extreme points \cite{G, TW}. Let $\Omega$ 
be a bounded convex domain in
$\mathbb R^n$, $n\geq 2$. A boundary point $z\in\partial\Omega$ is an {\it extreme point} of
$\Omega$ if there exists a hyperplane $L$ such that $\{z\}=L\cap\partial\Omega$, namely
$z$ is the unique point in $L\cap\partial\Omega$. It is known that any
interior point of $\Omega$ can be expressed as a linear combination of
extreme points of $\Omega$.
%\begin{defi}[{\cite[Definition 9.2.1]{NB}}]
    %A point $p$ of a convex set $K$ is an \emph{extreme point} of $K$ if $p$ is not between any two distinct points of $K$. That is, if there does not exist $x,y\in K$ and $0<t<1$ such that $x\neq y$ and $p=tx+(1-t)y$.
%\end{defi}
\vskip 8pt

\begin{proof}[Proof of Corollary \ref{cor:ruled}]
    We prove this corollary by contradiction. Suppose that there exists a non-trivial section of $u$ is contained in $\{x\in X:f(x)\leq 0\}$. Then there exists a point $x'$ such that 
   the contact set  
        \[\mathcal T_{x'}:=\left\{x\in X:u(x)=u(x')+Du(x')\cdot(x-x')\right\}\] 
        contains at least one extreme point $x_0$
    lying in the interior of $\{x\in X:f(x)\leq 0\}$. Thus, for sufficiently small $r>0$, we have 
 \[h:=\sup_{B_r(x_0)}(u-l_{x_0})>0\] and 
\[\{x\in X:u(x)<l_{x_0}(x)+h\}\subset\{x\in X:f(x)\leq 0\},\] 
where 
\[l_{x_0}(x)=u(x_0)+Du(x_0)\cdot(x-x_0).\] Then, by Lemma \ref{lem:key_lem} we choose an affine function $p_y(x)$ with the associated section $S=\{x: \ u(x)<p_y(x)\}$ and set
    $$u_h:=\max\{u,p_y\}.$$
    It is clear that $u_h$ is convex, then there is
    $L(u_h)\geq L(u)$
    since $u$ is a minimizer of \eqref{eq:func}. Note that $u_h$ is different from $u$ only on $S$. By \eqref{eq:p-u}, \eqref{eq:rem-(3)} in Remark \ref{rem:G=0} and 
\[S\subset \{x\in X:u(x)<l_{x_0}(x)+h\}\subset\{x\in X:f(x)\leq 0\},\] 
we have
    \begin{align*}
        0\leq L(u_h)- L(u)&=\int_{S}\left[(|y|^q+f(x)p_y(x))-(|Du(x)|^q+f(x)u(x))\right]\mathrm{d}x\\[5pt]
        &=\int_{S}(|y|^q-|Du(x)|^q)\mathrm{~d}x+\int_{S}(p_y(x)-u(x))f(x)\mathrm{~d}x\\[5pt]
        &\leq -C_2\frac{h^{\widetilde{q}}}{r^{\widetilde{q}}}|S|\\[5pt]
        &<0,
    \end{align*}
    which makes a contradiction. 
    Hence, any non-trivial section of $u$ can not be contained in the region where $f\leq 0$. This implies that the contact sets of $u$ have no extreme points inside $\{x\in X:f(x)\leq 0\}$, which gives us that $u$ should be a ruled surface in $\{x\in X:f(x)\leq 0\}$.
\end{proof}

\subsection{Examples of general \texorpdfstring{$b(x,y)$}{b(x,y)}}
We present two examples with general preference $b(x,y)$.

The first example is from the principal-agent problem. In \cite[Example 3.6]{FKM}, the authors considered a valuation function such as $b(x,y)=-\frac{1}{2}|x-y|^2$, where they showed that if the monopolist's cost of providing this service is independent of location, i.e. $c(y)$ is constant, then the principal-agent problem becomes mathematically equivalent to the case for $b(x,y)=x\cdot y$. In the following, we consider more general
preference functions $b(x,y)$ with $p$-power cost function $c(y)$. 
\begin{exam}
    Let $X\,,Y\subset\mathbb R^n$ be open, bounded and convex. Noticing that $-\frac{1}{2}|x-y|^2=x\cdot y-\frac{1}{2}(x^2+y^2)$. We take $b(x,y)=x\cdot y-F(x)-G(y)$, where $F\in C^4(\overline{X})$ and $G\in C^4(\overline{Y})$ are convex functions.
It is clear that $b(x,y)$ satisfies \ref{en:B0}. Next,  by direct calculation,  we have
$D_yb(x,y_0)=x-DG(y_0)$ and $D_xb(x_0,y)=y-DF(x_0)$, which yields \ref{en:B1} and \ref{en:B2}.
As for \ref{en:B3}, by considering either of the two curves $s\in[-1,1]\mapsto x(s)-DG(y(0))$ and $t\in[-1,1]\mapsto y(t)-DF(x(0))$ forms an affinely parameterized line segment, we have
$$\left.\frac{\partial^4}{\partial s^2\partial t^2}\right|_{(s,t)=(0,0)}b(x(s),y(t))=\frac{\mathrm{d}^2x(s)}{\mathrm{d}s^2}\,\frac{\mathrm{d}^2y(t)}{\mathrm{d}t^2} = 0.$$
Now we let $c(y)=|y|^q$, $q>1$ in \eqref{eq:prin-agen}. By Definition \ref{def:y_b}, we have
$$\mathbf{p}=D_xb(x,y_b(x,\mathbf{p}))=y_b(x,\mathbf{p})-DF(x),$$
i.e. 
$$y_b(x,\mathbf{p})=\mathbf{p}+DF(x).$$
Hence, we obtain
\begin{align*}
    c(y_b(x,\mathbf{p}))-b(x,y_b(x,\mathbf{p}))&=|y_b(x,\mathbf{p})|^q-x\cdot y_b(x,\mathbf{p})+F(x)+G(y_b(x,\mathbf{p}))\\
    &=|\mathbf{p}+DF(x)|^q-x\cdot(\mathbf{p}+DF(x))+F(x)+G(\mathbf{p}+DF(x)),
\end{align*}
which means that $c(y_b(x,\mathbf{p}))-b(x,y_b(x,\mathbf{p}))-|\mathbf{p}+DF(x)|^q$ is convex with respect to $\mathbf{p}$. By Proposition \ref{prop:q-convex}, we have that $c(y_b(x,\mathbf{p}))-b(x,y_b(x,\mathbf{p}))$ satisfies \ref{enu:H1}. Thus, we know that in this setting, Theorem \ref{thm:main} is applicable, i.e. the minimizer of the principal-agent problem \eqref{eq:prin-agen} belongs to $C^{1,\frac{1}{q-1}}$ when $q > 2$ and $C^{1,1}$ when $1<q\leq 2$. 
\end{exam}

The second example comes from optimal transport. According to \cite{MTW},  the perturbation $b(x,y)=x\cdot y+F(x)G(y)$ of the bilinear valuation function is non-negatively cross-curved \ref{en:B3} provided $F\in C^4(\overline{X})$ and $G\in C^4(\overline{Y})$ are both convex. It satisfies \ref{en:B0}-\ref{en:B1} provided $\sup_{x\in X} |DF (x)| < 1$ and $\sup_{y\in Y} |DG(y)| < 1$, and \ref{en:B2} if the convex domains $X$ and $Y \subset \mathbb R^n$ are sufficiently convex in the sense all principal curvatures of these domains are sufficiently large at each boundary point. 
For computational reasons, we consider a simple case.
\begin{exam}
    Suppose $X$, $Y \subset \mathbb R^n$ are sufficiently convex in the sense that all principal curvatures of these domains are sufficiently large at each boundary point. Let $b(x,y)=x\cdot y+F(x)(\mathbf{a}\cdot y)$ where $F\in C^4(\overline{X})$ is convex and and $\sup_{x\in X} |DF (x)| < 1$.
    Let $\mathbf{a}\in \mathbb R^n$ be a constant vector satisfying $|\mathbf{a}|<1$. From the discussion above, we know that $b(x,y)$ satisfies \ref{en:B0}-\ref{en:B3}. Now, for simplicity we choose $F^1(x,\mathbf{p})=|\mathbf{p}|^q$, $q>1$ and $F^0(x,z)=0$ in \eqref{eq:func}. By Definition \ref{def:y_b}, 
    $$\mathbf{p}=D_xb(x,y_b(x,\mathbf{p}))=y_b(x,\mathbf{p})+DF(x)(\mathbf{a}\cdot y_b(x,\mathbf{p}))=(I+DF(x)\otimes \mathbf{a})y_b(x,\mathbf{p}),$$
    where $DF(x)\otimes \mathbf{a}$ means the matrix of rank $1$ obtained by the vectors $DF(x)$ and $\mathbf{a}$. Since $\sup_{x\in X} |DF (x)| < 1$ and $|\mathbf{a}|<1$, we know that $I+DF(x)\otimes \mathbf{a}$ is invertible, which implies 
    \[y_b(x,\mathbf{p})=(I+DF(x)\otimes \mathbf{a})^{-1}\mathbf{p}.\] 
    Then we have
    $$F^1(x,y_b(x,\mathbf{p}))=|y_b(x,\mathbf{p})|^q=|(I+DF(x)\otimes \mathbf{a})^{-1}\mathbf{p}|^q.$$
    Similarly, by Proposition \ref{prop:q-convex}, $F^1(x,y_b(x,\mathbf{p}))$ satisfies \ref{enu:H1}, and Theorem \ref{thm:main} holds.
\end{exam}

\vskip 20pt

\section{Optimal regularities of minimizers}\label{sec:exam}

\vskip 12pt

In this section, we will provide examples to demonstrate that the regularity in Theorem \ref{thm:main} is optimal for $q\geq 2$. When $q=2$, there is an example constructed in \cite[Remark 5]{MRZ} to show that Theorem \ref{thm:main} is optimal. 

As mentioned in the introduction, we do not have an explicit Euler-Lagrange equation for the minimizers of \eqref{eq:F} with convexity constraints. In \cite{Li}, Lions has shown that the Euler-Lagrange equation for the minimizers of \eqref{eq:F} with convexity constraints has the following form:
\begin{equation}\label{eq:E-L-mu}
    \sum_{i,j=1}^n\frac{\partial^2}{\partial x_i\partial x_j}\mu_{ij}=\frac{\partial F}{\partial z}(x,u(x),Du(x))-\sum_{i=1}^n\frac{\partial}{\partial x_i}\left(\frac{\partial F}{\partial p_i}(x,u(x),Du(x))\right),
\end{equation}
where $\mu=(\mu_{ij})$ is a matrix-valued Radon measure and \eqref{eq:E-L-mu} holds in the sense of distribution. 
See  \cite{Ca} for a different proof and some extensions. Since very little is known about the measure $\mu$, the regularity of the minimizers of \eqref{eq:F} with convexity constraints via \eqref{eq:E-L-mu} is still inaccessible. 
However, in the one-dimensional case it was shown in \cite{Ca} by using \eqref{eq:E-L-mu}  that the minimizers of \eqref{eq:F} with certain conditions on $F$ must belong to the class of $C^1$. 
\begin{thm}[{\cite[Theorem 3]{Ca}}]\label{thm:Carlie}
    Assume that $n=1$ and suppose that $F(t,x,v)$ satisfies
    \begin{enumerate}
        \item $F$ is of class $C^1$ over $( a, b) \times \mathbb{R} \times\mathbb R$,
        \item there exists $\beta> 0$, $\alpha\in L^{q^{\prime}}(a, b) $ and $\gamma\in L^1(a, b)$ such that for all $( t, x, v) \in [ a, b] \times \mathbb{R} \times\mathbb R$,
        $$\begin{aligned}\left|\frac{\partial F}{\partial v}(t,x,v)\right|&\leq\alpha(t)+\beta\left(1+|v|^{q-1}\right),\\[5pt]
        \left|\frac{\partial F}{\partial x}(t,x,v)\right|&\leq\gamma(t)+\beta\left(1+|v|^q\right)
        \end{aligned}
        $$
        where we assume $q>1$, and $\frac {1}{q}+ \frac 1{q^{\prime}}= 1$,
        \item $F$ is strictly convex with respect to $v$. 
    \end{enumerate}
    Then the minimizers of \eqref{eq:F} with convexity constraints belong to $C^1(a,b)$.
\end{thm}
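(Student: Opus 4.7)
The plan is to argue by contradiction using a chord-perturbation. Since the minimizer $u$ is convex on $(a,b)$, one-sided derivatives exist everywhere, and it suffices to rule out corners, i.e.\ points $t_0\in(a,b)$ where $p^-:=u'(t_0^-)<p^+:=u'(t_0^+)$. The idea is that such a corner can always be smoothed out by replacing $u$ locally with a straight segment, and the strict convexity of $F$ in $v$ forces this replacement to strictly decrease the functional, contradicting minimality.

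Concretely, for small $\varepsilon>0$ I would define $u_\varepsilon$ to equal $u$ outside $[t_0-\varepsilon,t_0+\varepsilon]$ and to equal, on that interval, the chord joining $(t_0-\varepsilon,u(t_0-\varepsilon))$ to $(t_0+\varepsilon,u(t_0+\varepsilon))$. The chord has constant slope
\[
s_\varepsilon=\frac{u(t_0+\varepsilon)-u(t_0-\varepsilon)}{2\varepsilon}\longrightarrow\frac{p^-+p^+}{2}
\]
as $\varepsilon\to 0$, thanks to the jump of $u'$ at $t_0$. One checks that $u_\varepsilon$ is convex, hence admissible: the chord lies below $u$ by convexity of $u$, and at each endpoint $t_0\pm\varepsilon$ the chord's slope is sandwiched between the one-sided derivatives of $u$ there, preserving monotonicity of the left/right derivatives across the junction.

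The next step is to expand $L(u_\varepsilon)-L(u)$ to leading order in $\varepsilon$. Since $u$ is continuous at $t_0$, $u_\varepsilon(t)\to u(t_0)$ uniformly on the shrinking interval, $s_\varepsilon\to\tfrac{p^-+p^+}{2}$, and the monotone function $u'$ satisfies $u'(t)\to p^\pm$ from the appropriate side as $t\to t_0$, I would pass to the limit to obtain
\[
\lim_{\varepsilon\to 0}\frac{L(u_\varepsilon)-L(u)}{2\varepsilon}=F\!\Big(t_0,u(t_0),\tfrac{p^-+p^+}{2}\Big)-\tfrac{1}{2}\bigl[F(t_0,u(t_0),p^-)+F(t_0,u(t_0),p^+)\bigr],
\]
which is strictly negative by strict convexity of $F(t_0,u(t_0),\cdot)$ and $p^-\neq p^+$. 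Hence $L(u_\varepsilon)<L(u)$ for sufficiently small $\varepsilon$, contradicting the minimality of $u$.

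The main obstacle is justifying the limit computation under the merely $C^1$ regularity of $F$ and the $L^{q'}/L^1$ growth bounds in hypothesis (2). Since $u$ is locally Lipschitz on $(a,b)$ (being convex), $u'$ is locally bounded, and integrating the growth bound on $F_v$ yields a pointwise estimate $|F(t,z,v)|\lesssim |F(t,z,0)|+\alpha(t)|v|+\beta(|v|+|v|^q)$; the growth on $F_x$ then supplies integrability of $|F(t,u(t),0)|$. This produces a locally integrable majorant on a fixed neighborhood of $t_0$, which allows the averages $\tfrac{1}{2\varepsilon}\int_{t_0-\varepsilon}^{t_0+\varepsilon}F(t,u_\varepsilon,s_\varepsilon)\,dt$ and $\tfrac{1}{2\varepsilon}\int_{t_0-\varepsilon}^{t_0+\varepsilon}F(t,u,u')\,dt$ to converge to the asserted pointwise values at $t_0$. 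Carlier's original route is slightly different: he invokes the one-dimensional form of \eqref{eq:E-L-mu} together with complementary slackness between the nonnegative multiplier $\mu$ and the nonnegative measure $u''$, splits $(a,b)$ into its affine part and its strictly-convex part, and glues $u'$ continuously across the interface; the chord-perturbation above is the variational shortcut that avoids the explicit construction of $\mu$.
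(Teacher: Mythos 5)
The paper never proves this theorem: it is quoted from Carlier \cite{Ca}, whose proof (as the paper itself recalls in Section \ref{sec:exam}) goes through the one-dimensional Euler--Lagrange characterization \eqref{eq:E-L-mu} with a measure-valued multiplier. Your chord-replacement argument is a genuinely different and more elementary route, and it is correct: convexity of $u_\varepsilon$ follows, as you say, from the fact that the secant slope $s_\varepsilon$ is wedged between $u'((t_0-\varepsilon)^+)$ and $u'((t_0+\varepsilon)^-)$, so the derivative of $u_\varepsilon$ stays monotone and the boundary data are untouched; since the comparison is purely local and $u$ is locally Lipschitz, all arguments of $F$ stay in a fixed compact set, where continuity of $F$ alone justifies the two averaged limits, giving the strict midpoint deficit $F(t_0,u(t_0),\tfrac{p^-+p^+}{2})-\tfrac12[F(t_0,u(t_0),p^-)+F(t_0,u(t_0),p^+)]<0$ and the contradiction; and once jumps of $u'$ are excluded, $u\in C^1(a,b)$ follows from the one-sided continuity of the one-sided derivatives of a convex function. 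Two small corrections. First, the chord lies \emph{above} $u$, not below; this is harmless (indeed helpful), because the relevant constraints are convexity and, in the applications of this paper, a lower bound on $u$, both preserved when $u$ is raised locally. Second, the integrable-majorant discussion based on hypothesis (2) is unnecessary for your local limit; those growth conditions are what \cite{Ca} needs to make the functional and the multiplier in \eqref{eq:E-L-mu} well defined, i.e.\ they matter for the global Euler--Lagrange route, not for your perturbation. What Carlier's approach buys in exchange is structural information about the multiplier (its sign, support, and complementary slackness with $u''$), which your shortcut bypasses; for the bare $C^1$ statement asserted here, your argument suffices.
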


% The assumptions outlined in \ref{enu:H1}-\ref{enu:H3} are partially inspired by Theorem \ref{thm:Carlie} and have a certain naturalness in their formulation.

Let $\Omega=[-1,1]$, $q>2$, we consider the functional
\begin{equation}\label{eq:func-1d}
    L[u]:=\int_{-1}^{1}\frac{1}{q}|u'(x)|^q+u(x)\mathrm{~d}x
\end{equation}
over the set 
\[\left\{u:[-1,1]\to\mathbb R\,|\,u\text{ is convex and } u(1)=u(-1)=0\right\}.\] 
By Theorem \ref{thm:Carlie}, we know that the minimizer of \eqref{eq:func-1d} with convexity constraints is already $C^1$. What's more, the minimizer of \eqref{eq:func-1d} is $C^{1,1/(q-1)}$ for $q>2$ according to Theorem \ref{thm:main}. Now, we show that when $q>2$, the minimizer is at most $C^{1,1/(q-1)}$.

It is easy to see that the Euler-Lagrange equation of \eqref{eq:func-1d} without convexity constraint is
\begin{equation}\label{eq:E-L-1d}
    (|u'|^{q-2}u')'=1.
\end{equation}
Solving \eqref{eq:E-L-1d} with boundary conditions $u(-1)=u(1)=0$ yields
\begin{equation}\label{eq:sl-1}
    u(x)=\frac{q-1}{q}\left(|x|^{1+\frac{1}{q-1}}-1\right).
\end{equation}
It is clear that $u$ is a convex function on $[-1,1]$, which implies that the minimizers of $L(u)$ with or without a convexity constraint coincide.
Then we know that the regularity of $u(x)$ is at most $C^{1,1/(q-1)}$ when $q>2$. 

When $1<q<2$, we can observe that $u(x)$ in \eqref{eq:sl-1} has higher regularity than $C^{1,1}$. This implies the optimal regularity of $u$ remains undetermined in the $1<q<2$ case. Consequently, at the end of this section, we present two questions for future consideration. Firstly, what is the optimal regularity for the minimizer of \eqref{eq:func} when $1<q<2$? Secondly, can the regularity established in Theorem \ref{thm:main} be extended to the boundary under certain boundary conditions?

%Meanwhile, we notice that in one dimension, positive second derivatives of a function imply convexity. Thus, these facts motivate us to use standard techniques in the calculus of variations to derive the Euler-Lagrange equation of a given function with convexity constraints in one dimension, which give us the possibility to construct an example to show the optimality of the minimizer. In the following, we give the example.


\begin{thebibliography}{99999}
\bibitem{CL}
Caffarelli, L. A.; Lions, P. -L.: 
Unpublished and handwritten notes.

\bibitem{Ca1}
Carlier, G.:
A general existence result for the principal-agent problem with adverse selection.
\textit{J. Math. Econom.} \textbf{35} (2001), no. 1, 129–150.

\bibitem{Ca}
Carlier, G.: 
Calculus of variations with convexity constraint. 
\textit{J. Nonlinear Convex Anal.} \textbf{3} (2002),
no. 2, 125-143.

\bibitem{CL1}
Carlier, G.; Lachand-Robert, T.:
Regularity of solutions for some variational problems subject to a convexity constraint.
\textit{Comm. Pure Appl. Math.} \textbf{54} (2001), no. 5, 583-594.

\bibitem{CL2}
Carlier, G.; Lachand-Robert, T.: 
Representation of the polar cone of convex functions and applications. 
\textit{J. Convex Anal.} \textbf{15} (2008), no. 3, 535-546.

\bibitem{CR}
Carlier, G.; Radice, T.: 
Approximation of variational problems with a convexity constraint by
PDEs of Abreu type. 
\textit{Calc. Var. Part. Diff. Eqns.} \textbf{58} (2019), no. 5, Paper No. 170, 13 pp. 

\bibitem{Ch1}
Chen, S. B.:
\textit{Convex solutions to the power-of-mean curvature flow, conformally invariant inequalities and regularity results in some applications of optimal transportation.}
Thesis (Ph.D.)–University of Toronto.

\bibitem{Ch2}
Chen, S. B.:
Regularity of the solution to the principal-agent problem.
\textit{Geometric and Functional Inequalities and Recent Topics in Nonlinear PDEs}, 41-47.

\bibitem{F}
Figalli, A.: 
\textit{The Monge-Amp\`ere equation and its applications}. 
Zurich Lectures in Advanced Mathematics, European Mathematical Society (EMS), Z\"urich, 2017.

\bibitem{FKM}
Figalli, A.; Kim, Y. -H.; McCann, R. J.:
When is multidimensional screening a convex program?
\textit{J. Econom. Theory} \textbf{146} (2011), no. 2, 454-478.


\bibitem{G} 
Guti\'errez, C.: \textit{The Monge-Amp\`ere equation},
Progress in Nonlinear Differential Equations
and Their Applications \textbf{44} (2001), Birkh\"auser, Massachusetts.

% \bibitem{KM}
% Kim, Y. -H.; McCann, R. J.:
% Continuity, curvature, and the general covariance of optimal transportation.
% \textit{J. Eur. Math. Soc. (JEMS)} \textbf{12} (2010), no. 4, 1009–1040.

\bibitem{Le1}
Le, N. Q.:
Singular Abreu equations and minimizers of convex functionals with a convexity constraint.
\textit{Comm. Pure Appl. Math.} \textbf{73} (2020), no. 10, 2248-2283.

\bibitem{Le2}
Le, N. Q.:
Twisted Harnack inequality and approximation of variational problems with a convexity constraint by singular Abreu equations.
\textit{Adv. Math.} \textbf{434} (2023), Paper No. 109325.

\bibitem{Le3}
Le, N. Q.:
\textit{Analysis of Monge-Ampère equations}.
Grad. Stud. Math. \textbf{240} (2024), American Mathematical Society, Providence, RI, 

\bibitem{LZ}
Le, N. Q.; Zhou, B.:
Solvability of a class of singular fourth order equations of Monge-Amp\`ere type.
\textit{Ann. PDE} \textbf{7} (2021), no. 2, Paper No. 13, 32 pp.

\bibitem{Lin}
Lindqvist, P.:
Notes on the $p$-Laplace equation.
\href{https://folk.ntnu.no/lqvist/p-laplace.pdf}{\textit{Lecture notes available at the webpage of the author}.}

\bibitem{Li}
Lions, P. -L.: 
Identification du cône dual des fonctions convexes et applications. \textit{C. R. Acad. Sci. Paris Sér. I Math.} \textbf{326} (1998), no. 12, 1385-1390.

\bibitem{Lo}
Loeper, G.:
On the regularity of solutions of optimal transportation problems.
\textit{Acta Math.} \textbf{202} (2009), no. 2, 241-283.

\bibitem{MTW}
Ma, X. -N.; Trudinger, N. S.; Wang, X. -J.:
Regularity of potential functions of the optimal transportation problem.
\textit{Arch. Ration. Mech. Anal.} \textbf{177} (2005), no. 2, 151–183.

\bibitem{MRZ}
McCann, R. J.; Rankin, C.; Zhang, S. J.: 
$C^{1,1}$ regularity for principal-agent problems. 
\textit{arXiv:2303.04937v3}.

%\bibitem[NB]{NB}
%Narici, L.; Beckenstein, E.:
%\textit{Topological vector spaces.}
%Second edition.
%Pure Appl. Math. (Boca Raton), 296
%CRC Press, Boca Raton, FL, 2011. xviii+610 pp.

% \bibitem[Ro]{Ro}
% Rochet, J. -C:
% A necessary and sufficient condition for rationalizability in a
% quasilinear context. 
% \textit{J. Math. Econom.} \textbf{16} (1987), no. 2, 191–200.

\bibitem{RC}
Rochet, J. -C.; Chon\'e, P.:
Ironing, Sweeping, and Multidimensional
Screening.
\textit{Econometrica} \textbf{66} (1998), no. 4, 783-826.

\bibitem{TW}
Trudinger, N. S.; Wang, X. -J.: \textit{The Monge-Amp\`ere equations and its
geometric applications}, Handbook of geometric analysis,
International Press, 2008, pp. 467-524.

% \bibitem[Vi]{Vi}
% Villani, C.:
% \textit{Topics in optimal transportation.}
% Grad. Stud. Math., 58
% American Mathematical Society, Providence, RI, 2003. xvi+370 pp.

\end{thebibliography}
\end{document}